\def\opn#1#2{\def#1{\operatorname{#2}}} 
\opn{\cone}{cone}
\opn{\PF}{PF}
\opn{\F}{F}
\opn{\RF}{RF}
\opn{\Ap}{Ap}
\numberwithin{equation}{section}
\newtheorem{theorem}{Theorem}[section]
\newtheorem{proposition}[theorem]{Proposition}
\newtheorem{lemma}[theorem]{Lemma}
\newtheorem{coro}[theorem]{Corollary}
\newtheorem{definition}[theorem]{Definition}
\newtheorem{remark}[theorem]{Remark}
\newtheorem{example}[theorem]{Example}
\begin{document}
	
\title{On Row-Factorization relations of certain numerical semigroups} 

\address{IIT Gandhinagar, Palaj, Gandhinagar, Gujarat-382355 India}

\author{Om Prakash Bhardwaj}
\email{om.prakash@iitgn.ac.in}

\author{Kriti Goel}
\email{kritigoel.maths@gmail.com}

\author{Indranath Sengupta}
\email{indranathsg@iitgn.ac.in} 

\thanks{The second author is supported by the Early Career Fellowship at IIT Gandhinagar.}
\thanks{The third author is the corresponding author; supported by the MATRICS research grant MTR/2018/000420, sponsored by the SERB, Government of India.}
\thanks{2010 Mathematics Subject Classification: 20M14, 20M25, 13A02.}
\thanks{Keywords: Almost arithmetic numerical semigroup, pseudo-Frobenius elements, row-factorization matrix.}

\maketitle

\begin{abstract}
	Let $H$ be a numerical semigroup minimally generated by an almost arithmetic sequence. We give a description of a possible row-factorization $(\RF)$ matrix for each pseudo-Frobenius element of $H.$ Further, when $H$ is symmetric and has embedding dimension 4 or 5, we prove that the defining ideal is minimally generated by $\RF$-relations.
\end{abstract}

\section{Introduction}

Numerical semigroups and semigroup rings are widely studied objects in the literature (see \cite{rosales-sanchez}). Recently, A. Moscariello \cite{moscariello} introduced a new tool associated with the pseudo-Frobenius elements of numerical semigroups, namely, row-factorization ($\RF$) matrices, to explore the properties of semigroup rings. Moscariello used this object to investigate the type of almost symmetric semigroups of embedding dimension four and proved the conjecture given by T. Numata in \cite{numata}, which states that the type of an almost symmetric semigroup of embedding dimension four is at most three. The connection between the $\RF$-matrices and numerical semigroup rings is further studied by K. Eto, J. Herzog and K.-i. Watanabe in (\cite{etoGeneric}, \cite{etoRowFactor}, \cite{eto2017}, \cite{herzogWatanabe}), where they explore the set theoretic complete intersection property, minimal free resolution, generic property, etc for the defining ideal of certain monomial curves. In \cite{etoGeneric}, K. Eto gives necessary and sufficient conditions for a toric ideal to be generic in terms of $\RF$-matrices and as a result proves that the defining ideal of almost Gorenstein monomial curves is not generic if their embedding dimension is greater than three.

Let $k$ be a field and $H = \langle m_0,m_1,\ldots,m_p \rangle$ be a numerical semigroup minimally generated by $\{m_0,m_1,\ldots,m_p\}$. For an indeterminate $t$ over $k$, the semigroup ring of $H$ is a subring of $k[t]$, defined as $k[H] = k[t^{m_0},\ldots,t^{m_p}].$ The ring $k[H]$ can be represented as a quotient of a polynomial ring using a canonical surjection $\pi : k[x_0,x_1,\ldots,x_p] \rightarrow k[H]$ given by $\pi(x_i) = t^{m_i},$ for all $i.$ The kernel of this $k$-algebra homomorphism $\pi$, say $I(H),$ is a prime ideal (called toric ideal) and is the defining ideal of $k[H]$. The ring $k[H]$ is called a toric ring. 

In this paper, we consider the numerical semigroups generated by an almost arithmetic sequence. In particular, we have $H = \langle m_0,m_1,\ldots,m_p,n \rangle$ such that $m_i = m_0 + id $ for $i = 0,\ldots,p$, $d >0$ and $\gcd(m_0,n,d) = 1.$ The aim of this paper is to prove that if $H$ is a symmetric numerical semigroup generated by an almost arithmetic sequence and has embedding dimension 4 or 5, then the defining ideal is minimally generated by $\RF$-relations. 

This, in the first place, requires knowledge of the pseudo-Frobenius elements of $H.$ For this, we refer to the computations in \cite{patil-sengupta}, where the authors give an explicit description of pseudo-Frobenious elements for the numerical semigroups of this class. Also, for a numerical semigroup $H$, there is a one-to-one correspondence between the set of pseudo-Frobenius elements and the set of maximal elements of the A\'{p}ery set of a non-zero element of $H$ with respect to the partial order $\leq_H$ defined by: $x \leq_H y$ if and only if $y-x \in H$ (see \cite[Proposition 2.20]{rosales-sanchez}).  A description of the A\'{p}ery set,  Frobenius number of a numerical semigroup generated by an almost arithmetic sequence is given in \cite{RamirezRodseth} and \cite{rodsethoystein}.

We now summarize the contents of the paper. In Section 2, we recall some basic definitions, along with the definition of $\RF$-matrices. We also recall some results from the papers \cite{patil-sengupta} and \cite{patil-singh}, which are used in the further sections. In Sections 3 and 4, we give the computation of $\RF$-matrices. A numerical semigroup $H$ of type 1 is called symmetric. It is known that $k[H]$ is Gorenstein if and only if $H$ is a symmetric numerical semigroup. In section 5, we identify the cases in which a numerical semigroup $H$, minimally generated by an almost arithmetic sequence, is symmetric. We compute the $\RF$-matrices in each of these cases. The rows of $\RF$-matrices are known to produce binomials in the ideal $I(H).$ These binomials are called $\RF$-relations. Since $I(H)$ may not always be minimally generated by $\RF$-relations, one may ask when is such a scenario possible. This question was raised by Herzog and Watanabe in \cite{herzogWatanabe}. They answered the question in the affirmative for the cases when the embedding dimension is 3 and when $H$ is pseudo-symmetric or almost symmetric with embedding dimension 4. In Theorem \ref{p=2,t=1} and Theorem \ref{p=3,t=1}, we prove that $I(H)$ is minimally generated by $\RF$-relations when $H$ is a symmetric numerical semigroup, minimally generated by an almost arithmetic sequence with embedding dimension 4 or 5.

\section{Notation and Preliminaries}

Let $\mathbb{Z}$ and $\mathbb{N}$ be the sets of integers and non-negative integers respectively. Let $p>0$ and $m_0,m_1,\ldots,m_p \in \mathbb{N}$ such that $\gcd(m_0,m_1,\ldots,m_p) = 1.$ Then
\[ \langle m_0,m_1,\ldots,m_p \rangle = \left\lbrace \sum_{i=0}^{p} a_i m_i \mid a_i \in \mathbb{N}, \forall i \right\rbrace\]
is called the numerical semigroup generated by $m_0,m_1,\ldots,m_p.$


\begin{definition}{\rm 
Let $H$ be a numerical semigroup and $m$ be a non-zero element of $H.$ The set $\Ap(H,m) = \{h\in H \mid h-m \notin H \}$ is called the Ap\'ery set of $H$ with respect to $m.$
}
\end{definition}

\begin{definition}
{\rm Let $H$ be a numerical semigroup. An element $f \in \mathbb{Z} \setminus H $ is called a pseudo-Frobenius 
number if $f + h \in H$ for all $h \in H \setminus \{0\}$. We will denote the set of pseudo-Frobenius numbers 
of $H$ by $\PF(H).$
}
\end{definition}

Note that if $f \in \PF(H)$, then $f + m_i \in H$ for all $i \in [0,p].$ Hence, for all $i \in [0,p],$ there exist $a_{i0}, a_{i1},\ldots, a_{ip} \in \mathbb{N}$ such that
\[
f + m_i = \sum_{j=0}^{p} a_{ij} m_j.
\]
In the above expression, $a_{ii} = 0.$ Because if $a_{ii} > 0,$ then it would imply $f \in H.$ We now recall the notion of row-factorization matrix ($\RF$-matrix) given by A. Moscariello in \cite{moscariello}.

\begin{definition}
{\rm Let $H$ be a numerical semigroup generated by $m_0,\ldots,m_p$ and $f \in \PF(H)$. We say that $ A = (a_{ij}) \in M_{p+1} (\mathbb{Z})$ is an $\RF$-matrix for $f$ if for all $i \in [0,p],$
	\[
 	\sum_{j=0}^{p} a_{ij} m_j = f,
	\]
	where $a_{ii} = -1$ for all $i \in [0,p]$, and $a_{ij} \in \mathbb{N}$ for all $i, j \in [0,p]$ and $i \neq j.$ For $f \in \PF(H)$, we will denote an $\RF$-matrix of $f$ by $\RF(f)$.
	}
\end{definition}

Observe that for a given $f \in \PF(H),$ $\RF(f)$ may not be not unique. A factorization of $f + m_i$ gives the $(i+1)^{th}$ row of an $\RF$-matrix for $f.$ Thus, if $f+m_i$ does not have unique factorization in $H$, then the matrix $\RF(f)$ is not unique. Nevertheless, $\RF(f)$ will be the notation for one of the possible $\RF$-matrices of $f.$ If $H$ is symmetric, i.e. $\text{type}(H) = |\PF(H)|=1,$ then $\RF(H)$ denotes an $\RF$-matrix of the only pseudo-Frobenius number of $H.$

Assume that the sequence $m_0,m_1,\ldots,m_p$ is a strictly increasing arithmetic sequence of positive integers and $n$ is a positive integer such that $\gcd(m_0,\ldots,m_{p},n) = 1.$ Also, assume that $\{m_0,\ldots,m_p,n\}$ is a minimal generating set for the numerical semigroup $H.$ The following result was proved by D.P. Patil and B. Singh in \cite{patil-singh}.

\begin{lemma}[{\cite[Lemma 3.1, 3.2]{patil-singh}}]  \label{P-I}
	Let $d$ be a positive integer such that $m_i=m_0+id$ for all $0 \leq i \leq p.$ Let $n$ be an arbitrary positive integer with $\gcd(m_0,d,n) = 1.$ Let $H' = \sum_{i=0}^{p} \mathbb{N} m_i$ and $H = H'+\mathbb{N}n.$ For $t \in \mathbb{N},$ let $q_t \in \mathbb{Z}, r_t \in [1,p]$ and $g_t \in H'$ defined by $t=q_tp + r_t$ and $g_t = q_tm_p + m_{r_t}.$
	\begin{enumerate}
		\item Let $u = \min \{t\in \mathbb{N} \mid g_t \notin \Ap(H,m_0) \}$ and $v = \min \{ b \geq 1 \mid bn \in H'\}.$ Then there exist unique integers $z \in [0,u-1], w \in [0,v-1], \lambda \geq 1$, and $\mu \geq 0$ such that
		\begin{itemize}
			\item[(i)] $g_u = \lambda m_0 + wn;$
			\item[(ii)] $vn = \mu m_0 + g_z;$ and	
			\item[(iii)] $g_{u-z} + (v-w)n = \nu m_0.$ Moreover, $\nu = \lambda + \mu + \epsilon$ where $\epsilon=1$ or $0$ according as $r_{u-z} < r_u$ or $r_{u-z} \geq r_u.$
		\end{itemize}
		
		\item $g_s + g_t = \epsilon m_0 + g_{s+t}$ with $\epsilon = 1$ or $0$ according as $r_s+ r_t \leq p$ or $r_s + r_t > p.$
	\end{enumerate}
\end{lemma}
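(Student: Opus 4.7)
My plan is to handle part (2) as a direct algebraic identity and then use it, combined with the minimalities defining $u$ and $v$, to derive part (1).

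For part (2), I would substitute $g_s = q_s m_p + m_{r_s}$ and $g_t = q_t m_p + m_{r_t}$, and expand via $m_i = m_0 + id$ (noting $m_p = m_0 + pd$). In the case $r_s + r_t \leq p$, the decomposition $s+t = (q_s+q_t)p + (r_s+r_t)$ is already valid, so $g_{s+t} = (q_s+q_t)m_p + m_{r_s+r_t}$, and comparison with $g_s + g_t$ exhibits a surplus of exactly one $m_0$. In the case $r_s + r_t > p$, a carry shifts $r_{s+t} = r_s + r_t - p$ and $q_{s+t} = q_s + q_t + 1$, and the extra $m_p$ in $g_{s+t}$ absorbs the extra $m_0$, yielding equality.

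For part (1), the first observation is that iterating part (2) allows any element of $H'$ to be written in a normal form $\alpha m_0 + g_t$ with $\alpha \geq 0$ and $t \geq 0$. For (i), choose $\lambda$ maximal with $g_u - \lambda m_0 \in H$; then $Y := g_u - \lambda m_0$ lies in $\Ap(H, m_0)$. Expanding $Y$ in the generators of $H$ and reducing via normal form, maximality of $\lambda$ rules out any $m_0$-carry, so $Y = g_T + bn$ with $T, b \geq 0$; minimality of $v$ forces $b < v$, and the crux (discussed below) is that $T$ must be zero, giving $g_u = \lambda m_0 + w n$ with $w \in [0, v-1]$. Part (ii) is parallel: $vn \in H'$ by definition of $v$, so its $H'$-normal form reads $vn = \mu m_0 + g_z$, and if one had $z \geq u$, combining with (i) would produce $(v - w) n \in H'$ with $v - w < v$, contradicting the minimality of $v$. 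For (iii), apply part (2) to $g_{u-z} + g_z = \epsilon m_0 + g_u$; since $r_u \equiv r_{u-z} + r_z \pmod p$ with $r_u \in [1,p]$, the condition $r_{u-z} + r_z \leq p$ is equivalent to $r_{u-z} < r_u$, matching the $\epsilon$ in the statement. Then combining with (i) and (ii):
\[
g_{u-z} \;=\; g_u - g_z + \epsilon m_0 \;=\; (\lambda + \mu + \epsilon)\, m_0 + (w - v)\, n,
\]
which rearranges to $g_{u-z} + (v - w)n = \nu m_0$ with $\nu = \lambda + \mu + \epsilon$.

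The main obstacle is the claim in (i) that $T = 0$, equivalently the structural description
\[
\Ap(H, m_0) \;=\; \{\, g_s + b n : s \in [0, u-1],\ b \in [0, v-1]\,\}.
\]
To establish this, one checks that these candidate elements are pairwise incongruent modulo $m_0$ (using the minimalities of $u$ and $v$ to forbid any collision), and verifies exhaustiveness by a parallel reduction argument: every element of $\Ap(H, m_0)$ reduces into this range because any excess in the $g_T$-direction is cut down using (i), and any excess in the $n$-direction is cut down using (ii). Once this structural identification is in place, uniqueness of the quadruple $(\lambda, w, \mu, z)$ in (i)--(ii) is immediate from the uniqueness of Ap\'ery representatives, completing the proof.
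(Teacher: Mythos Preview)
The paper does not prove this lemma; it is quoted from \cite{patil-singh}. So there is no in-paper argument to compare against, and the question is simply whether your sketch is correct.

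Your treatment of part (2) and the derivation of (iii) from (i), (ii) and (2) are fine. The genuine gap is in your handling of (i). The structural description you propose,
\[
\Ap(H,m_0) \;=\; \{\, g_s + b n : s \in [0,u-1],\ b \in [0,v-1]\,\},
\]
is false in general: when $W = [u-z,u-1]\times[v-w,v-1]$ is nonempty the right-hand side has $uv$ elements, while $|\Ap(H,m_0)| = m_0$, and these differ (e.g.\ in the paper's example $H=\langle 11,13,15,17,19,21\rangle$ one has $uv=15\neq 11$). The correct Ap\'ery set is the rectangle with the sub-rectangle $W$ removed. Consequently your ``pairwise incongruent'' claim cannot hold, and your exhaustiveness argument invokes (i) and (ii) to establish a description that you then use to prove (i), which is circular.

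Fortunately the claim $T=0$ that you actually need admits a direct proof that bypasses the Ap\'ery description entirely. From $g_u = \lambda m_0 + g_T + b n$ with $\lambda\ge 1$, the identity $g_t = (q_t+1)m_0 + td$ shows $g_T < g_u$ and hence $T<u$; then part (2) applied to $g_T + g_{u-T}$ gives $g_{u-T} = (\lambda+\epsilon)m_0 + bn$, so $g_{u-T}-m_0\in H$, contradicting the minimality of $u$ unless $T=0$. The bound $b<v$ then follows by the same mechanism (if $b\ge v$, rewrite $vn$ via its $H'$-normal form and either peel off an $m_0$ or produce a representation with positive $g$-index, again contradicting minimality of $u$). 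With (i) in hand, (ii) follows as you outlined, and only afterwards can one establish the (corrected) Ap\'ery description, which in turn yields uniqueness.
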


In addition to the above notation, set $r=r_u, r'= r_{u-z}, q=q_u, q'= q_{u-z}$, and $W= [u-z,u-1]\times[v-w,v-1].$ Note that if $w=0$ or $z=0$, then $W$ is empty.

\begin{lemma}[{\cite[Lemma 2.4]{patil-sengupta}}]  \label{P-B} 
	With the above notation,
	\begin{enumerate}
		\item $u \geq p+1$, $q \geq 1$ and $q' \in [0,q];$
		\item  $r_z = 
		\begin{cases} 
			r-r' & \text{ if } r>r',\\
			p+r-r' & \text{ if } r \leq r'; \\
		\end{cases}$
		
		\item if $q'=q$ and $z \neq 0,$ then $r' < r$ and $r \geq 2;$
		\item if $\mu = 0,$ then $z > p$ and $q' < q;$
		\item if $\lambda =1,$ then $w \neq 0;$ 
		\item if $\lambda = 1$ and $\mu = 0,$ then $r' < r$ and $r \geq 2.$ 
	\end{enumerate}
\end{lemma}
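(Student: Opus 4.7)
The plan is to verify the six items in turn, using (a) the explicit form $t = q_t p + r_t$ with $r_t \in [1,p]$, (b) minimality of the generating set $\{m_0, \ldots, m_p, n\}$, and (c) the three structural identities (i)--(iii) of Lemma \ref{P-I}.

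For item (1), I would note that if $t \in [1,p]$ then $q_t = 0$ and $r_t = t$, so $g_t = m_t$; since $m_t$ is a minimal generator, $m_t - m_0 \notin H$, hence $m_t \in \Ap(H, m_0)$. This forces $u \geq p+1$; writing $u = qp + r$ with $r \in [1,p]$ then gives $q \geq 1$, and $q' \in [0,q]$ follows from $u - z \in [1, u]$ together with the fact that $q_t$ is non-decreasing in $t$. Item (2) is pure arithmetic: subtracting $u - z = q'p + r'$ from $u = qp + r$ yields $z = (q - q')p + (r - r')$, and forcing $r_z \in [1,p]$ selects between the two displayed cases. Item (3) is then immediate: $q' = q$ reduces (2) to $z = r - r'$, so $z \neq 0$ forces $r > r'$ and hence $r \geq r'+1 \geq 2$. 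Item (5) is a direct size check: $\lambda = 1$ with $w = 0$ would give $g_u = m_0$, but $g_u = qm_p + m_r \geq m_p > m_0$ since $q \geq 1$, a contradiction.

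The two subtler items are (4) and (6). For (4), $\mu = 0$ yields $vn = g_z$. To exclude $z \leq p$, I would split cases: $z = 0$ gives $vn = 0$, impossible since $v, n \geq 1$; $z \in [1,p]$ gives $vn = m_z$, which either forces $n = m_z$ when $v = 1$ (contradicting the distinctness of generators) or forces $m_z - n = (v-1)n \in H$ when $v \geq 2$ (contradicting minimality of $m_z$). Hence $z \geq p+1$, and then $u - z \leq u - p - 1 = (q-1)p + (r-1)$ forces $q' \leq q - 1$ (split again on $r = 1$ versus $r \geq 2$ to confirm). For (6), combining identity (iii) of Lemma \ref{P-I} with $\lambda = 1, \mu = 0$ gives $g_{u-z} + (v-w)n = (1 + \epsilon)\,m_0$; but $g_{u-z} \geq m_{r'} \geq m_1 > m_0$ and $(v-w)n \geq 0$, so $\epsilon = 0$ is impossible, hence $\epsilon = 1$, i.e., $r' < r$ and $r \geq 2$.

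The main obstacle, as I see it, is the small-$z$ exclusion in (4); this is the only step that truly invokes the full minimality hypothesis on the generating set, and it requires a careful split between $v = 1$ and $v \geq 2$. Once that is in place, the remaining items are essentially book-keeping with the division identity of (2) and the three identities of Lemma \ref{P-I}.
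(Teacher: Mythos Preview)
The paper does not supply its own proof of this lemma: it is quoted verbatim from \cite[Lemma~2.4]{patil-sengupta} and used as a black box, so there is no in-paper argument to compare against. Your proposed proof is correct and self-contained; each of the six items is handled by exactly the kind of elementary bookkeeping you describe, and the one genuinely nontrivial step---ruling out $z\le p$ in item~(4) via the minimality of the generating set, with the $v=1$ versus $v\ge 2$ split---is carried out correctly.
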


In this article, we aim to describe the structure of $\RF$-matrices for pseudo-Frobenius numbers of a numerical semigroup $H$ defined by an almost arithmetic sequence, using the above results. We discuss the cases $W= \emptyset$ and $W \neq \emptyset$ separately.
We use the above results throughout the paper without repeated reference.

\section{The case $W = \emptyset$}
Throughout this section we will assume that $W = \emptyset$ and $H = \langle m_0,m_1,\ldots,m_p,n \rangle$; where $m_i = m_0 + id$ for $i \in [1,p]$ and $\gcd(m_0,n,d)=1.$ 

Let $\gamma_k = g_{(q-1)p+r+k-1}+(v-1)n-m_0,$ for $k \in [1,p].$ In this case, using \cite[Proposition 3.3]{patil-sengupta}, it follows that the set of pseudo-Frobenius numbers is
\[ \PF(H) = 
	\begin{cases} 
      \big\{ \gamma_k \mid k\in [1,p] \big\} & \text{ if } r=1,\\
      \big\{ \gamma_k \mid k\in [p-r+2,p] \big\} & \text{ if } r\geq 2.\\
   \end{cases}
\]
We find the factorizations of $\gamma_k + m_j$ for $j \in [0,p]$, and $\gamma_k + n$ in $H.$ From these factorizations, we form an $\RF$-matrix for $\gamma_k$.

\begin{proposition}\label{r=1,L>1}
	Let $r=1$ and $\lambda > 1.$ Then for every element $\gamma_k$ of $\PF(H)$, $k \in [1,p]$, there exists an $\RF$-matrix of the following type
	\begin{multline*}
	\RF(\gamma_k) =
	\begin{bmatrix}
	-1 & 0 & 0 & \cdots& \cdots & \cdots & 0 & q-1 & v-1\\
	0 & -1 & 0 & \cdots & \cdots & \cdots & 0 & q-1 & v-1\\
	\vdots & \vdots & \vdots & \vdots & \vdots & \vdots  & \vdots  & \vdots  & \vdots \\
	0 & 0 & \cdots & -1 & 0 & \cdots & 0 & q-1 & v-1\\
	\lambda -2 & 0 & 0 & \cdots & -1 & 0 & \cdots & 0 & w+v-1\\
	\vdots & \vdots & \vdots & \vdots & \vdots & \vdots  & \vdots  & \vdots  & \vdots \\
	\lambda-2 & 0 & 0 & \cdots & \cdots & \cdots & 0 & -1 & w+v-1\\
	a_{p+2,1} & a_{p+2,2} & a_{p+2,3} & \cdots & \cdots &\cdots & \cdots & a_{p+2,p+1} & -1
	\end{bmatrix}
	+
	\begin{bmatrix}
 	R_{(0)} \\ R_{(1)} \\ \vdots \\  R_{(p-k)} \\ R_{(p-k+1)}\\ \vdots \\  R_{(p)} \\ R_{(n)}
	\end{bmatrix},
	\end{multline*}
	where 
	\begin{enumerate}
		\item $R_{(0)} = (a^{(0)},\ldots,a^{(k)},\ldots,a^{(n)})$ such that $a^{(k)} = 1$ and all other $a^{(j)}$'s equal to zero,
		\item for all $j \in [1,p-k],$ $R_{(j)} = (a^{(0)},\ldots,a^{(k+j)},\ldots,a^{(n)})$ such that $a^{(k+j)} = 1$ and all other $a^{(j)}$'s equal to zero,
		\item for all $j \in [p-k+1,p],$ $R_{(j)} = (a^{(0)},\ldots,a^{(k+j-p-1)},\ldots,a^{(n)})$ such that $a^{(k+j-p-1)} = 1$ and all other $a^{(j)}$'s equal to zero,
		\item if $\mu \geq 1,$ then $(a_{p+2,1},a_{p+2,2},\ldots,a_{p+2,p+1},-1) = (\mu-1, 0,\ldots,0, q+q_z-1,-1)$ and $R_{(n)} = (a^{(0)},\ldots,a^{(k)},\ldots,a^{(r_z)},\ldots,a^{(n)})$ such that $a^{(k)}= a^{(r_z)} = 1$ and all other $a^{(j)}$'s equal to zero. Otherwise, when $\mu=0,$  
		$(a_{p+2,1},a_{p+2,2},\ldots,a_{p+2,p+1},-1) = (\lambda-2,0,\ldots,0,q_{z}-1,-1)$ and $R_{(n)} = (a^{(0)},\ldots,a^{(k-1)},\ldots,a^{(r_{z})},\ldots,a^{(n)})$ such that $a^{(k-1)}= a^{(r_{z})} = 1$ and all other $a^{(j)}$'s equal to zero.
	\end{enumerate}
\end{proposition}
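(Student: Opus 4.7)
The plan is to verify row-by-row that the displayed matrix is an $\RF$-matrix of $\gamma_k$; that is, each row $i$ records a valid expression of $\gamma_k + m_i$ (or $\gamma_k + n$ for the last row) as an $\mathbb{N}$-linear combination of the generators of $H$, with $-1$ on the diagonal. Since $\gamma_k = g_{(q-1)p+k} + (v-1)n - m_0$ (using $r=1$) and $g_{(q-1)p+k} = (q-1)m_p + m_k$ for $k \in [1,p]$, the $0$-th row is immediate from the definition of $\gamma_k$.

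For $j \in [1,p]$, I would apply Lemma~\ref{P-I}(2) to $s = (q-1)p+k$ and $t = j$, splitting into two subcases according to whether $r_s + r_t = k+j$ is at most $p$ or greater than $p$. When $j \in [1,p-k]$ the lemma gives $\epsilon = 1$ and $g_s + g_j = m_0 + g_{s+j}$, so that $\gamma_k + m_j = g_{(q-1)p+k+j} + (v-1)n = (q-1)m_p + m_{k+j} + (v-1)n$, matching the stated row. (At the boundary $j = p-k$ one has $g_{p-k+j} = m_p$, which accounts for the extra $+1$ in the $m_p$-column contributed by $R_{(p-k)}$.) When $j \in [p-k+1,p]$ the lemma gives $\epsilon = 0$ and $g_s + g_j = g_{s+j} = qm_p + m_{k+j-p}$; combining with Lemma~\ref{P-I}(1)(i), which for $r = 1$ reads $qm_p + m_1 = \lambda m_0 + wn$, and using $m_{k+j-p} - m_1 = (k+j-p-1)d = m_{k+j-p-1} - m_0$, one rewrites
\[
\gamma_k + m_j \;=\; (\lambda-1)m_0 + m_{k+j-p-1} + (w + v - 1)n,
\]
which is exactly the asserted row. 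The hypothesis $\lambda > 1$ is used precisely to ensure $\lambda - 2 \in \mathbb{N}$.

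The last row, giving a factorization of $\gamma_k + n$, is the main technical hurdle and naturally splits into the two subcases $\mu \geq 1$ and $\mu = 0$. Substituting $vn = \mu m_0 + g_z$ from Lemma~\ref{P-I}(1)(ii) yields
\[
\gamma_k + n \;=\; (\mu-1)m_0 + g_{(q-1)p+k} + g_z \;=\; (\mu-1)m_0 + (q+q_z-1)m_p + m_k + m_{r_z},
\]
so when $\mu \geq 1$ the row description follows directly, with $R_{(n)}$ contributing the $m_k$ and $m_{r_z}$ terms. When $\mu = 0$, Lemma~\ref{P-B}(4) forces $z > p$, so $q_z \geq 1$; simultaneously $W = \emptyset$ together with $z \neq 0$ forces $w = 0$, so the identity of Lemma~\ref{P-I}(1)(i) simplifies to $qm_p + m_1 = \lambda m_0$. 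Then $qm_p + m_k = \lambda m_0 + (k-1)d = (\lambda-1)m_0 + m_{k-1}$, which allows the $(q+q_z-1)m_p + m_k - m_0$ portion of $\gamma_k + n$ to be rewritten as $(\lambda - 2)m_0 + m_{k-1} + (q_z-1)m_p$, matching the stated row with $R_{(n)}$ now contributing $m_{k-1}$ and $m_{r_z}$.

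Finally, non-negativity of all off-diagonal entries follows from $q \geq 1$ and $v \geq 1$ (Lemma~\ref{P-B}(1) and the definition of $v$), from $\lambda \geq 2$ (the standing hypothesis), and from $q_z \geq 1$ in the $\mu = 0$ subcase (consequence of $z > p$). I expect the trickiest bookkeeping to be in this last row, specifically recognizing that the hypothesis $W = \emptyset$ forces $w = 0$ whenever $\mu = 0$, thereby unifying the ``$\lambda - 2$'' expression appearing both in the middle block and in the last row under a common source.
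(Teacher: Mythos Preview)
Your approach is essentially identical to the paper's own proof: a row-by-row verification using Lemma~\ref{P-I}(2) to combine $g_{(q-1)p+k}$ with $g_j$, the relation $g_u = \lambda m_0 + wn$ from Lemma~\ref{P-I}(1)(i), and in the $\mu=0$ subcase the observations $z>p$ (Lemma~\ref{P-B}(4)) and $w=0$ (from $W=\emptyset$). One small arithmetic slip: in the case $j\in[p-k+1,p]$ your displayed formula should read $(\lambda-2)m_0 + m_{k+j-p-1} + (w+v-1)n$, not $(\lambda-1)m_0$, since the substitution $m_{k+j-p}-m_1 = m_{k+j-p-1}-m_0$ absorbs one more copy of $m_0$; you clearly intend this, given your immediate remark that $\lambda>1$ is needed so that $\lambda-2\in\mathbb{N}$.
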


\begin{proof}
	Since $r=1$, $\PF(H) = \{\gamma_k \mid k \in [1,p]\}$ where $\gamma_k =g_{(q-1)p+k} + (v-1)n - m_0.$
	
	(1) $\gamma_k + m_0 = g_{(q-1)p+k} +(v-1)n =(q-1)m_p + m_k + (v-1)n.$\\
	
	(2) If $j \in [1,p-k],$ then
	\begin{align*}
		\gamma_k + m_j 
		= g_{(q-1)p+k}+(v-1)n-m_0 + m_j 
		&= g_{(q-1)p+k} + g_j+(v-1)n-m_0  \\
		&= m_0 + g_{(q-1)p+k +j} +(v-1)n-m_0 \\
		&= (q-1)m_p+m_{k+j}+(v-1)n.
	\end{align*}

	(3) If $j=p-k+1,$ then 
	\begin{align*}
		\gamma_k + m_j 
		&= g_{(q-1)p+k} + (v-1)n - m_0 + m_{p-k+1}  \\
		&= (q-1)m_p + m_k + (v-1)n - m_0 + m_{p-k+1} \\
		&= (q-1)m_p + (v-1)n - m_0 + m_{p} + m_1 \\
		&= g_u + (v-1)n - m_0 
		= (\lambda - 1)m_0+(w+v-1)n.
	\end{align*}
	
	If $j > p-k+1$ and $\lambda > 1,$ then
	\begin{align*}
		\gamma_k + m_j 
		= g_{(q-1)p+k}+(v-1)n-m_0 + m_j 
		&= (q-1)m_p + m_k +(v-1)n-m_0 + m_j \\
		&= q m_p + (v-1)n - m_0 + m_{k+j-p} \\
		&= g_u + (v-1)n - 2m_0 + m_{k+j-p-1} \\
		&= (\lambda - 2)m_0 + m_{k+j-p-1} + (w+v-1)n.
	\end{align*}

	(4) If $\mu >0$, then
	\begin{align*}
		\gamma_k + n 
		= g_{(q-1)p+k}+vn-m_0 
		&= g_{(q-1)p+k} + (\mu - 1)m_0+g_z \\
		&= (q-1)m_p +m_k + (\mu - 1)m_0+q_zm_p + m_{r_z}  \\
		&= (\mu -1)m_0 +m_k + m_{r_z}+ (q + q_z -1)m_p. 
	\end{align*}
	
	If $\mu = 0,$ then $z > p$. This implies that $w=0,$ $q_{z-p} = q_z-1$ and $r_{z-p} = r_z$. Thus
	\begin{align*}
		\gamma_k + n 
		= g_{(q-1)p+k}+vn-m_0 
		&= g_{(q-1)p+k} - m_0 + g_z \\
		&= g_{(q-1)p+k} - m_0 + g_p + g_{z-p} \\
		&= g_{qp+k} - m_0 + g_{z-p} \\
		&= q m_p +m_k - m_0 + g_{z-p}  \\
		&= g_u + m_{k-1} - 2m_0 + g_{z-p} \\
		&= (\lambda-2)m_0 + m_{k-1} + (q_z -1)m_p + m_{r_{z}}.
		\qedhere
	\end{align*}
\end{proof}

\begin{proposition}\label{r=1,L=1}
	Let $r=1$ and $\lambda =1.$ Then  for every element $\gamma_k$ of $\PF(H),$ $k \in [1,p]$, there exists an $\RF$-matrix of the following type
	\begin{multline*}
		\RF(\gamma_k)=
		\begin{bmatrix}
			-1 & 0 & 0 & \cdots& \cdots & \cdots & 0 & q-1 & v-1\\
			0 & -1 & 0 & \cdots & \cdots & \cdots & 0 & q-1 & v-1\\
			\vdots & \vdots & \vdots & \vdots & \vdots & \vdots  & \vdots  & \vdots  & \vdots \\
			0 & 0 & \cdots & -1 & 0 & \cdots & 0 & q-1 & v-1\\
			\mu -1 & 0 & 0 & \cdots & -1 & 0 & \cdots & 0 & w-1\\
			\vdots & \vdots & \vdots & \vdots & \vdots & \vdots  & \vdots  & \vdots  & \vdots \\
			\mu-1 & 0 & 0 & \cdots & \cdots & \cdots & 0 & -1 & w-1\\
			\mu-1 & 0 & 0 & \cdots & \cdots &\cdots & \cdots & q-1 & -1\\
		\end{bmatrix}
		+
		\begin{bmatrix}
			R_{(0)} \\ R_{(1)} \\ \vdots \\  R_{(p-k)} \\ R_{(p-k+1)}\\ \vdots \\  R_{(p)} \\ R_{(n)}\\
		\end{bmatrix},
	\end{multline*}
	where
	\begin{enumerate}
		\item $R_{(0)} = (a^{(0)},\ldots,a^{(k)},\ldots,a^{(n)})$ such that $a^{(k)} = 1$ and all other $a^{(j)}$'s equal to zero,
		\item for all $j \in [1,p-k]$, $R_{(j)} = (a^{(0)},\ldots,a^{(k+j)},\ldots,a^{(n)})$  such that $a^{(k+j)} = 1$ and all other $a^{(j)}$'s equal to zero,
		\item for all $j \in [p-k+1,p]$, $R_{(j)} = (a^{(0)},\ldots,a^{(k+j-p-1)},\ldots,a^{(n)})$  having $a^{(k+j-p-1)} = 1$ and all other $a^{(j)}$'s equal to zero,             
		\item $R_{(n)} = (a^{(0)},\ldots,a^{(k)},\ldots,a^{(n)})$ such that $a^{(k)} = 1$ and all other $a^{(j)}$'s equal to zero.
	\end{enumerate}
\end{proposition}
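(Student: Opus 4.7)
The plan is to mirror the proof of Proposition \ref{r=1,L>1}: compute factorizations of $\gamma_k + m_j$ for each $j \in [0,p]$ and of $\gamma_k + n$ in $H$, then identify them with the rows of the claimed $\RF$-matrix after accounting for the $-1$ on the diagonal and for the shifts $R_{(j)}$ and $R_{(n)}$. For $j \in [0, p-k]$ the computation is identical to the $\lambda > 1$ case and invokes only Lemma \ref{P-I}(2): from $\gamma_k + m_0 = (q-1)m_p + m_k + (v-1)n$, and from $r_k + r_j = k + j \leq p$ for $j \in [1, p-k]$, one obtains $\gamma_k + m_j = (q-1)m_p + m_{k+j} + (v-1)n$, which produces the first $p-k+1$ rows together with $R_{(0)}, \ldots, R_{(p-k)}$.

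The substantive steps are the rows for $m_j$ with $j \in [p-k+1, p]$ and for $n$, since these change shape when $\lambda = 1$. For such $j$, Lemma \ref{P-I}(2) with $r_k + r_j > p$ gives $m_k + m_j = g_{k+j} = m_p + m_{k+j-p}$, so
\begin{equation*}
\gamma_k + m_j = q m_p + m_{k+j-p} + (v-1)n - m_0.
\end{equation*}
The key input now is the identity $g_u = m_0 + wn$ from Lemma \ref{P-I}(1)(i) specialized to $\lambda = 1$, together with $vn = \mu m_0 + g_z$ from Lemma \ref{P-I}(1)(ii); applying both to absorb the stray $-m_0$ and re-express the coefficient of $n$ brings the expression into the form demanded by the middle block of the matrix. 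An entirely parallel manipulation on $\gamma_k + n = (q-1)m_p + m_k + vn - m_0 = (\mu - 1)m_0 + m_k + (q-1)m_p + g_z$ yields the bottom row, paired with $R_{(n)}$.

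A useful simplification is that Lemma \ref{P-B}(6) rules out the combination $\lambda = 1$ and $\mu = 0$ as soon as $r = 1$, so $\mu \geq 1$ is automatic; consequently, unlike Proposition \ref{r=1,L>1}, no separate $\mu = 0$ sub-case is required, and the entries $\mu - 1$ appearing in the matrix are legitimately non-negative. The main obstacle is therefore not conceptual but bookkeeping: after the above substitutions one has to check that each residual term lands exactly on the entry of the matrix prescribed by $R_{(j)}$ or $R_{(n)}$, and that the coefficients $w-1$ and $q-1$ in the last two columns match after the collapse produced by $g_u = m_0 + wn$. This verification is tedious but straightforward, entirely parallel to the corresponding one in the proof of Proposition \ref{r=1,L>1}.
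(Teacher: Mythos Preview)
Your overall strategy matches the paper's, but there is a missing observation that is essential for the computation to close up. You correctly invoke Lemma~\ref{P-B}(6) to get $\mu\ge 1$, yet you never use that $W=\emptyset$ forces $z=0$ in the present situation: since $\lambda=1$, Lemma~\ref{P-B}(5) gives $w\neq 0$, and then $W=[u-z,u-1]\times[v-w,v-1]=\emptyset$ leaves $z=0$ as the only possibility. The paper records this explicitly and uses it (together with the ensuing $\nu=\mu+1$ and $g_z=0$) to finish both the middle block and the last row.

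Without $z=0$, your manipulations do not land on the claimed matrix. For $j\in[p-k+1,p]$, combining $g_u=m_0+wn$ and $vn=\mu m_0+g_z$ as you propose yields
\[
\gamma_k+m_j=(\mu-1)m_0+m_{k+j-p-1}+(w-1)n+g_z,
\]
and your own computation of the last row is $\gamma_k+n=(\mu-1)m_0+m_k+(q-1)m_p+g_z$. In both cases the residual $g_z$ remains; the rows of the matrix in the statement have no slot for it (unlike in Proposition~\ref{r=1,L>1}, where the last row genuinely carries a $q_z m_p+m_{r_z}$ contribution). Only after inserting $z=0$, hence $g_z=0$, do these expressions collapse to the required $(\mu-1)m_0+m_{k+j-p-1}+(w-1)n$ and $(\mu-1)m_0+m_k+(q-1)m_p$. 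So the ``tedious but straightforward'' verification you defer actually fails unless this step is supplied; once it is, the argument is exactly the paper's.
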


\begin{proof} 
	For (1) and (2), see the proof of Proposition \ref{r=1,L>1}.\\
	
	(3) Since $\lambda = 1$ and $r=1$, using Lemma \ref{P-B} we get $w,\mu \neq 0$. Also, $W = \emptyset$ implies that $z = 0$ and hence $\nu = \mu +1.$ Put $\ell = k+j-p$. Since $k+j \geq p+1$, we get $\ell \in [1,p]$ and
	\begin{align*}
		\gamma_k + m_j 
		= g_{(q-1)p + k} + (v-1)n -m_0 + m_j
		&=g_{(u+l-1)} + (v-1)n -m_0\\
		&=g_{(u+l-1)} + \nu m_0 - g_u + (w-1)n-m_0\\
		& = (\nu -2)m_0+m_{\ell - 1} + (w-1)n\\
		&=(\mu-1)m_0+m_{k+j-p-1}+(w-1)n.
	\end{align*}
	
	(4) Since $z=0,$ we have $g_z =0$ and therefore, 
	\begin{align*}
		\gamma_k + n 
		= g_{(q-1)p+k}+vn-m_0
		&= g_{(q-1)p+k} + (\mu - 1)m_0\\
		&= (q-1)m_p +m_k + (\mu - 1)m_0.
		\qedhere
	\end{align*} 
\end{proof}

\begin{coro}\label{p=1}
	Suppose $p=1.$ Then $\PF(H) = \{\gamma_1\}$ and there exists an $\RF$-matrix of $\gamma_1$ of either of the following type
	\begin{align*}
		\begin{bmatrix}
			-1 &  u-1 & v-1     \\
			\lambda-1 &  -1 & w+v-1    \\
			\mu-1 & u+z-1 & -1 
		\end{bmatrix}
		,
		\begin{bmatrix}
			-1  &  u-1 & v-1      \\
			\lambda-1  &  -1   & w+v-1  \\
			\lambda-1 & z-1 & -1 
		\end{bmatrix}
		, \text{ or }
		\begin{bmatrix}
			-1  &  u-1 & v-1      \\
			\mu  &  -1   & w-1  \\
			\mu-1 & u-1 & -1 
		\end{bmatrix}
		.
	\end{align*}
\end{coro}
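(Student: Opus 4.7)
The plan is to derive the corollary by specializing Proposition \ref{r=1,L>1} and Proposition \ref{r=1,L=1} to $p=1$ and tracking how the base matrix combines with the correction rows $R_{(\cdot)}$ when several indices are forced to coincide.

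First I would observe that with $p=1$, the condition $t = q_t p + r_t$ with $r_t \in [1,p]$ forces $r_t = 1$ for every $t$; in particular $r = r_u = 1$, so the pseudo-Frobenius description recalled at the start of the section yields $\PF(H) = \{\gamma_k : k \in [1,1]\} = \{\gamma_1\}$. Also $g_t = t\,m_1$, $q_t = t-1$ for all $t$, and $u = q+1$ (so that $u-1 = q$). This reduces the whole problem to exhibiting the claimed $3 \times 3$ matrices in three mutually exclusive sub-cases.

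Next I would split on $\lambda$. \textbf{When $\lambda > 1$}, Proposition \ref{r=1,L>1} applies with $p = k = 1$, and I would read off the three rows one at a time. The first row of the base matrix is $(-1, q-1, v-1)$, and $R_{(0)}$ adds $1$ in the $x_1$-column to give $(-1, u-1, v-1)$. The second row is $(\lambda-2, -1, w+v-1)$; the relevant correction here is $R_{(p-k+1)} = R_{(1)}$, which adds $1$ to the $x_0$-column (because $k+j-p-1 = 0$) to give $(\lambda-1, -1, w+v-1)$. For the third row I would split further on $\mu$, following part (4) of Proposition \ref{r=1,L>1}. If $\mu \geq 1$, the base row $(\mu-1, q+q_z-1, -1)$ combines with $R_{(n)}$; since $k = r_z = 1$ these two ``$+1$'' contributions collide in the $x_1$-column, yielding $(\mu-1, q+q_z+1, -1) = (\mu-1, u+z-1, -1)$. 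This produces the first matrix. If $\mu = 0$, Lemma \ref{P-B}(4) gives $z > p = 1$; the base row $(\lambda-2, q_z-1, -1)$ combines with $R_{(n)}$, whose two nonzero entries now land in distinct columns ($k-1 = 0$ and $r_z = 1$), producing $(\lambda-1, z-1, -1)$. This gives the second matrix.

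\textbf{When $\lambda = 1$}, Lemma \ref{P-B}(5) forces $w \neq 0$; since $W$ is empty and $w \geq 1$, we must have $z = 0$, and then Lemma \ref{P-B}(4) (in its contrapositive form) forces $\mu \geq 1$. Proposition \ref{r=1,L=1} then applies with $p = k = 1$. The first row is again $(-1, u-1, v-1)$ after adding $R_{(0)}$; the second row $(\mu-1, -1, w-1)$ combines with $R_{(1)}$ (contributing $1$ to the $x_0$-column) to give $(\mu, -1, w-1)$; the third row $(\mu-1, q-1, -1)$ combines with $R_{(n)}$ (contributing $1$ to the $x_1$-column since $k = 1$) to give $(\mu-1, u-1, -1)$. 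This produces the third matrix, completing the case analysis.

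The argument is essentially bookkeeping rather than a new computation, since everything is inherited from the two propositions already proved. The one subtlety I expect to be careful about is the ``collision'' issue in the third row when $\mu \geq 1$: because $p = 1$ forces $r_z = 1$ and we simultaneously have $k = 1$, the two $+1$'s in the definition of $R_{(n)}$ fall in the same column and must be summed rather than kept separate, which is precisely what produces the $u + z - 1 = q + z$ entry in the first matrix. Once this (and the analogous collision in the first row via $R_{(0)}$) is handled correctly, the three matrices in the statement fall out directly.
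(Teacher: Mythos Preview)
Your proposal is correct and follows essentially the same route as the paper: both specialize Propositions~\ref{r=1,L>1} and~\ref{r=1,L=1} to $p=1$, using $r_t=1$, $q_t=t-1$, $k=1$ to read off the three rows in each sub-case. The only difference is presentational---you track the base matrix plus the correction vectors $R_{(\cdot)}$, whereas the paper writes out the factorizations $\gamma_1+m_0$, $\gamma_1+m_1$, $\gamma_1+n$ directly---and your explicit handling of the $k=r_z=1$ collision in $R_{(n)}$ is a nice clarification of a point the paper leaves implicit.
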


\begin{proof}
	Since $p=1,$ we get $r_t=1,~ q_t=t-1$ for all $t$ and $\PF(H)=\{\gamma_1 \}.$ Therefore, we have $k=1.$ Let $\lambda >1.$ Using Proposition \ref{r=1,L>1}, we have
	\begin{align*}
	\gamma_1+m_0 &= g_q+(v-1)n = (u-1)m_1+(v-1)n,  \\
	\gamma_1+m_1 &= (\lambda-1)m_0+(w+v-1)n.
	\end{align*}
	If $\mu >0$, then 
	\[ \gamma_1+n = (\mu-1)m_0+ (q+q_z+1)m_1 = (\mu-1)m_0+(u+z-1)m_1. \]
	If $\mu = 0,$ then $\gamma_1+n = (\lambda-1)m_0 + (q_{z-1}+1)m_1 = (\lambda-1)m_0 + (z-1)m_1.$

	Let $\lambda=1.$ Using Proposition \ref{r=1,L=1}, we get
	\begin{align*}
		\gamma_1 + m_0 &= (u-1)m_1+(v-1)n, \\  
		\gamma_1 + m_1 &= \mu m_0 + (w-1)n,  \\
		\gamma_1 + n &= (\mu - 1)m_0 + q m_1.
	\end{align*}
	Thus, we have the above possible three matrices.
\end{proof}

\begin{proposition}\label{r geq 2}
	Let $r \geq 2.$ Then  for every element $\gamma_k$ of $\PF(H)$, $k\in [p-r+2,p],$ there exists an $\RF$-matrix of the following type
	\begin{multline*}
		\RF(\gamma_k)
		=
		\begin{bmatrix}
			-1 & 0 & 0 & \cdots& \cdots & \cdots & 0 & q & v-1\\
			0 & -1 & 0 & \cdots & \cdots & \cdots & 0 & q & v-1\\
			\vdots & \vdots & \vdots & \vdots & \vdots & \vdots  & \vdots  & \vdots  & \vdots \\
			0 & 0 & \cdots & -1 & 0 & \cdots & 0 & q & v-1\\
			\lambda -1 & 0 & 0 & \cdots & -1 & 0 & \cdots & 0 & w+v-1\\
			\vdots & \vdots & \vdots & \vdots & \vdots & \vdots  & \vdots  & \vdots  & \vdots \\
			\lambda-1 & 0 & 0 & \cdots & \cdots & \cdots & 0 & -1 & w+v-1\\
			a_{p+2,1} & a_{p+2,2} & a_{p+2,3} & \cdots & \cdots &\cdots & \cdots & a_{p+2,p+1} & -1\\
		\end{bmatrix}
		+
		\begin{bmatrix}
			R_{(0)} \\ R_{(1)} \\ \vdots \\  R_{(e)} \\ R_{(e+1)}\\ \vdots \\  R_{(p)} \\ R_{(n)}\\
		\end{bmatrix},
	\end{multline*}
	where $e \in [1,p-1]$ such that $r+k+e = 2p+1$ and
	\begin{enumerate}
		\item $R_{(0)} = (a^{(0)},\ldots,a^{(p-e)},\ldots,a^{(n)})$ such that $a^{(p-e)} = 1$ and all other $a^{(j)}$'s equal to zero,
		
		\item for all $j \in [1,e]$, $R_{(j)} = (a^{(0)},\ldots,a^{(p-e+j)},\ldots,a^{(n)})$ such that $a^{(p-e+j)} = 1$ and all other $a^{(j)}$'s equal to zero,
		
		\item for all $j \in [e+1,p]$, $R_{(j)} = (a^{(0)},\ldots,a^{(k+j-p-1)},\ldots,a^{(n)})$  such that $a^{(k+j-p-1)} = 1$ and all other $a^{(j)}$'s equal to zero,
		\item  if $\mu \geq 1,$ then $(a_{p+2,1},a_{p+2,2},\ldots,a_{p+2,p+1},-1)= (\mu-1, 0,\ldots,0, q+q_z,-1)$ and $R_{(n)} = (a^{(0)},\ldots,a^{(p-e)},\ldots,a^{(r_z)},\ldots,a^{(n)})$ such that $a^{(p-e)}= a^{(r_z)} = 1$ and all other $a^{(j)}$'s equal to zero. Otherwise, when $\mu = 0$ then $ (a_{p+2,1},a_{p+2,2},\ldots,a_{p+2,p+1},-1)=(\lambda-1,0,\ldots,0,q_{z}-1,-1)$ and $R_{(n)} = (a^{(0)},\ldots,a^{(k-1)},\ldots,a^{(r_{z})},\ldots,a^{(n)})$ such that $a^{(k-1)}= a^{(r_{z})} = 1$ and all other $a^{(j)}$'s equal to zero. 
	\end{enumerate}
\end{proposition}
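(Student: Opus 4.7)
The plan is to mimic the structure used in Propositions \ref{r=1,L>1} and \ref{r=1,L=1}: produce explicit factorizations of $\gamma_k + m_j$ for each $j \in [0,p]$ and of $\gamma_k + n$ in $H$, and read off the rows of $\RF(\gamma_k)$ from these. The full matrix is obtained by subtracting $m_i$ from the $i$-th coordinate of the $i$-th row, which accounts for the $-1$'s on the diagonal. So the argument is essentially computational, relying systematically on Lemmas \ref{P-I} and \ref{P-B}.

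The first step is to put $\gamma_k$ in standard form. Writing $t = (q-1)p+r+k-1$ and using $k \in [p-r+2,p]$, one checks $q_t = q$ and $r_t = k+r-p-1 = p-e$, where $e = 2p+1-r-k$. This identification of $e$ as the ``carry threshold'' in Lemma \ref{P-I}(2) is what makes the split $[1,e]$ vs.\ $[e+1,p]$ the right one: $r_t + j \leq p$ holds if and only if $j \leq e$. For $j = 0$ and $j \in [1,e]$, the $m_0$ carried over by Lemma \ref{P-I}(2) absorbs the $-m_0$ coming from the definition of $\gamma_k$, yielding rows of the form $q m_p + m_{p-e+j} + (v-1)n$. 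These give the top $e+1$ rows of the displayed matrix.

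For $j \in [e+1,p]$ the carry vanishes, giving $\gamma_k + m_j = (q+1)m_p + m_{j-e} - m_0 + (v-1)n$. This is then rewritten using $g_u = \lambda m_0 + w n$ from Lemma \ref{P-I}(1)(i) to obtain $(\lambda-1) m_0 + m_{k+j-p-1} + (w+v-1)n$; the algebraic identity $m_p + m_{j-e} = m_r + m_{k+j-p-1}$ needed for this step is verified by comparing $d$-coefficients and using $r+k+e = 2p+1$. These supply the middle rows of the matrix.

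The final row, coming from $\gamma_k + n$, splits into two subcases. When $\mu \geq 1$, one substitutes $vn = \mu m_0 + g_z$ from Lemma \ref{P-I}(1)(ii) into $g_t + vn - m_0$ and regroups to obtain $(\mu-1)m_0 + m_{p-e} + m_{r_z} + (q+q_z) m_p$. When $\mu = 0$, Lemma \ref{P-B}(4) forces $z > p$, and $W = \emptyset$ together with $\mu = 0$ forces $w = 0$ (otherwise $vn = g_z$ with $g_z = 0$ is absurd); one then writes $g_z = g_p + g_{z-p}$ with $q_{z-p} = q_z - 1$ and $r_{z-p} = r_z$, and uses $g_u = \lambda m_0$ to rearrange into $(\lambda-1) m_0 + m_{k-1} + m_{r_z} + (q_z - 1) m_p$. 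The main obstacle, such as it is, lies not in any single computation but in the case bookkeeping: one must track whether the sum $r_s + r_t$ exceeds $p$ at each application of Lemma \ref{P-I}(2), and handle the $\mu = 0$ branch separately because the bottom row of the matrix genuinely differs from that of the $\mu \geq 1$ branch.
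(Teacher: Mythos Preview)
Your proposal is correct and follows essentially the same approach as the paper: explicit factorizations of $\gamma_k + m_j$ for $j \in [0,p]$ and of $\gamma_k + n$, split according to whether $j \le e$ (carry) or $j > e$ (no carry), with the final row handled by the $\mu \ge 1$ versus $\mu = 0$ dichotomy exactly as in Proposition~\ref{r=1,L>1}. The only cosmetic difference is that in step~(3) you regroup via the identity $m_p + m_{j-e} = m_r + m_{k+j-p-1}$, whereas the paper manipulates the $g$-indices directly to reach $g_u + g_{k+j-p-1}$; both arrive at the same factorization.
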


\begin{proof}
	Since $r \geq 2$, $\PF(H) = \{\gamma_k \mid k \in[p-r+2,p]\}.$ Therefore,\\
	
	(1) $\gamma_k + m_0= g_{(q-1)p+r+k-1}+(v-1)n =qm_p+ m_{p-e}+(v-1)n.$ \\
	
	(2) If $j \in [1,e]$, then $r+k+j-p-1 \leq p$ and
	\begin{align*}
		\gamma_k + m_j 
		= g_{(q-1)p+r+k-1}+(v-1)n-m_0 + m_j 
		&=m_0 + qm_p+m_{p-e+j} +(v-1)n-m_0\\ 
		&= qm_p + m_{p-e+j} + (v-1)n.
	\end{align*}
	
	(3) If $j \in [e+1,p],$ then $r+k+j-p-1 > p$ and
	\begin{align*}
		\gamma_k + m_j 
		= g_{(q-1)p+r+k-1}+(v-1)n-m_0 + m_j
		&=g_{u-p+j+k-1}-(v-1)n-m_0\\
		&=g_u -m_0 + g_{j+k-p-1}-(v-1)n\\
		&= (\lambda - 1)m_0 + wn + m_{k+j-p-1}+(v-1)n \\
		&= (\lambda - 1)m_0 + m_{k+j-p-1} + (w+v-1)n.
	\end{align*} 

	(4) If $\mu \geq 1,$ then 
	\begin{align*}
		\gamma_k + n
		= g_{(q-1)p+r+k-1}+(v-1)n-m_0+ n 
		&=qm_p + m_{p-e} + (\mu -1)m_0 + g_z\\
		&=qm_p + m_{p-e} + (\mu -1)m_0 + q_z m_p + m_{r_z}\\
		&=(\mu -1)m_0  + m_{p-e} +  m_{r_z}+(q + q_z)m_p.
	\end{align*}
	
	If $\mu = 0,$ then $z > p.$ This implies that $w=0,$ $q_{z-p} = q_z-1$ and $r_{z-p} = r_z$. Thus 
	\begin{align*}
		\gamma_k + n
		= g_{(q-1)p+r+k-1}+(v-1)n-m_0+ n 
		&=g_{u-p+k-1}+vn-m_0\\
		&=g_u+g_{z-p}+g_{k-1}-m_0\\
		&= (\lambda - 1)m_0 + m_{k-1}+ g_{z-p}\\
		&= (\lambda - 1)m_0 + m_{k-1}+ (q_{z}-1) m_p + m_{r_{z}}.
		\qedhere
	\end{align*}
\end{proof}

\begin{example} 
{\rm Let $H = \langle m_0 = 14, m_1 = 17, m_2 =20, m_3 = 23, m_4 = 26, n= 21 \rangle.$ Then $H' = \langle 14, 17, 20, 23, 26 \rangle$ and $p=4.$ The Ap\'{e}ry set of $H$ with respect to $14$ is
	\begin{align*}
		\Ap(H,14) = \{0,17,20,21,23,26,38,41,43,44,46,47,64,67 \}.
	\end{align*} 
	The smallest $i$ such that $g_i \notin \Ap(H,14)$ is $g_7$ $(= 49).$ Hence, $u=7,$ $q=1,$ and $r=3$. Since $21 \notin H'$ but $42 \in H',$ we have $v=2.$ Also, $49 = 21+ 2 \times 14$, $42 = 3 \times 14,$ implies that $w=1$, $z=0,$ $\lambda=2,$ $\mu=3,$ $r'=3,$  $q'=1,$ and $\nu=5.$ Since $r \geq 2$, we get $\PF(H) = \{\gamma_3,\gamma_4\} = \{50,53\}.$ Using Proposition {\rm \ref{r geq 2}},
	\begin{align*}
	\begin{array}{ c c c }
  	\gamma_3 + m_0 = m_1 + m_4 + n, & \gamma_3 + m_1 = m_2 + m_4 + n, & \gamma_3 + m_2 = m_3 + m_4 + n, \\ 
 	\gamma_3 + m_3 = 2m_4 + n, & \gamma_3 + m_4 = m_0 + m_2 + 2n, & \gamma_3 + n= 2m_0 + m_1 + m_4. 
	\end{array}
	\end{align*}
	We can therefore write an $\RF$-matrix for $\gamma_3 = 50$ from these expressions,
	\begin{align*}
	\RF(\gamma_3) = \left[	
	\begin{array}{c c c c c c}
	-1 & 1 & 0 & 0 & 1 & 1\\
	0 & -1 & 1 & 0 & 1 & 1\\
	0 & 0 & -1 & 1 & 1 & 1\\
	0 & 0 & 0 & -1 & 2 & 1\\
	1 & 0 & 1 & 0 & -1 & 2\\
	2 & 1 & 0 & 0 & 1 & -1
	\end{array}
	\right].
	\end{align*}
	Similarly,
	\begin{align*}
		\begin{array}{ c c c }
			\gamma_4 + m_0 = m_2 + m_4 + n, & \gamma_4 + m_1 = m_3 + m_4 + n, & \gamma_4 + m_2 =2m_4 + n,\\
			\gamma_4 + m_3 = m_0 + m_2 + 2n, & \gamma_4 + m_4 = m_0 + m_3 + 2n, & \gamma_4 + n= 2m_0 + m_2 + m_4.\\ 
		\end{array}
	\end{align*}
	The $\RF$-matrix for $\gamma_4 = 53$ from the above expressions is as follows\\
	\[ \RF(\gamma_4) =
	\left[
	\begin{array}{c c c c c c}
		-1 & 0 & 1 & 0 & 1 & 1\\
		0 & -1 & 0 & 1 & 1 & 1\\
		0 & 0 & -1 & 0 & 2 & 1\\
		1 & 0 & 1 & -1 & 0 & 2\\
		1 & 0 & 0 & 1 & -1 & 2\\
		2 & 0 & 1 & 0 & 1 & -1
	\end{array}
	\right].
	\]
	Using the software GAP \cite{GAP4},  package \texttt{numericalsgps} \cite{gap-ns}, one can check that the number of $\RF$-matrices for 50 are 720 and that for 53 are 2520.
	}
\end{example}

\begin{coro} \label{r=2,ne}
	Let $r=2$. Then H is symmetric and there exists an $\RF(H)$ of the following type
	\[ \RF(H) =
	\left[
	\begin{array}{c c c c c c c c c }
		-1 & 1 & 0 & 0 & \cdots & \cdots & 0 & q & v-1\\
		0 & -1 & 1 & 0  & \cdots & \cdots & 0 & q & v-1\\
		\vdots & \vdots & \vdots & \vdots  &\vdots & \vdots & \vdots & \vdots & \vdots\\
		\vdots & \vdots & \vdots & \vdots  &\vdots & \vdots & \vdots & \vdots & \vdots\\
		0 & 0 & 0 & 0 & \cdots & -1 & 1 & q &v-1\\
		0 & 0 & 0 & 0  & \cdots & \cdots & -1 & q+1 & v-1 \\
		\lambda -1 & 0 & 0 & 0 & \cdots & \cdots & 1 & -1 & w+v-1\\
		a_{p+2,1} & a_{p+2,2} & a_{p+2,3} & \cdots & \cdots &\cdots & \cdots & a_{p+2,p+1} & -1
	\end{array}
	\right],
	\]
	where $(a_{p+2,1},\ldots,a_{p+2,p+1},-1) = (\mu -1,1,0,\ldots,0,q+q_z,-1)+ (a^{(0)},\ldots,a^{(r_z)},\ldots,a^{(n)})$ such that $a^{(r_z)} = 1$ and all other $a^{(j)}$'s equal to zero when $\mu > 0$. Otherwise, when $\mu = 0$, we have
	\begin{center}
		$(a_{p+2,1},\ldots,a_{p+2,p+1},-1) = (\lambda -1,0,\ldots,0,1,q_{z}-1,-1)+(a^{(0)},\ldots,a^{(r_{z})},\ldots,a^{(n)})$ 
	\end{center} 
	such that $a^{(r_{z})} = 1$ and all other $a^{(j)}$'s equal to zero.
\end{coro}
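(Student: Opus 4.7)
The plan is to deduce the corollary from Proposition~\ref{r geq 2} by specializing the parameters to the case $r=2$. The only real content beyond that specialization is the symmetry statement, so I would split the argument into two short steps.

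\emph{Step 1: Symmetry.} From the characterization of $\PF(H)$ stated just before Proposition~\ref{r=1,L>1}, when $r\geq 2$ one has $\PF(H)=\{\gamma_k\mid k\in[p-r+2,p]\}$. Plugging in $r=2$ collapses this index range to the single value $k=p$, so $\PF(H)=\{\gamma_p\}$. Since $\mathrm{type}(H)=|\PF(H)|=1$, this shows $H$ is symmetric and identifies its unique pseudo-Frobenius number as $\gamma_p$.

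\emph{Step 2: Specializing the $\RF$-matrix.} I would now invoke Proposition~\ref{r geq 2} with $r=2$ and $k=p$. The integer $e\in[1,p-1]$ defined there by $r+k+e=2p+1$ becomes $e=p-1$, which in turn forces:
\begin{itemize}
\item $R_{(0)}$ places a $1$ in column $p-e=1$, i.e.\ in the $m_1$-column;
\item for $j\in[1,p-1]$, $R_{(j)}$ places a $1$ in column $p-e+j=j+1$, i.e.\ in the $m_{j+1}$-column, producing the shifted pattern $(0,\dots,-1,1,0,\dots,0,q,v-1)$ in rows $2,\ldots,p-1$;
\item the single index $j=p$ in the range $[e+1,p]$ yields $R_{(p)}$ with a $1$ in column $k+j-p-1=p-1$, which is exactly what produces the row $(\lambda-1,0,\dots,0,1,-1,w+v-1)$.
\end{itemize}
Finally the $n$-row is read off from part (4) of Proposition~\ref{r geq 2} in the two cases $\mu\geq 1$ and $\mu=0$, with the substitutions $k=p$ (so $m_{k-1}=m_{p-1}$) and $p-e=1$ (so $m_{p-e}=m_1$); these give, respectively, the row $(\mu-1,1,0,\dots,0,q+q_z,-1)+R_{(n)}$ and the row $(\lambda-1,0,\dots,0,1,q_z-1,-1)+R_{(n)}$, matching the two displayed expressions for $(a_{p+2,1},\ldots,a_{p+2,p+1},-1)$.

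\emph{Expected difficulties.} There is essentially no mathematical obstacle: once Proposition~\ref{r geq 2} is available, the corollary is a matter of substituting $r=2$, $k=p$, and $e=p-1$ into the general formula and observing that the index intervals $[1,e]$ and $[e+1,p]$ become $[1,p-1]$ and $\{p\}$. The only point that needs a separate (one-line) argument is the symmetry claim in Step~1, which is why I would state and justify it before unpacking the matrix. The authors will presumably write the proof by simply referring back to Proposition~\ref{r geq 2} after noting that $\PF(H)=\{\gamma_p\}$.
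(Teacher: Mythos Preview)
Your proposal is correct and matches the paper's approach: the authors likewise observe that $r=2$ forces $k=p$, hence $\PF(H)=\{\gamma_p\}$ and $H$ is symmetric, and then write out the factorizations of $\gamma_p+m_j$ and $\gamma_p+n$ that amount to specializing Proposition~\ref{r geq 2} with $r=2$, $k=p$ (equivalently $e=p-1$). The only cosmetic difference is that the paper records the resulting factorizations explicitly rather than pointing back to Proposition~\ref{r geq 2}, but the content is identical.
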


\begin{proof}
	Since $r=2,$ we get $k=p$. Therefore, $\PF(H)=\{\gamma_p\}$ and so $H$ is symmetric. Also, as $k=p$ and $r=2,$ we have $r+k+j-p-1 > p$ only for $j=p.$ Therefore,
	\begin{align*}
		\gamma_p + m_0  &= g_{qp+1}+(v-1)n = qm_p + m_1 +(v-1)n,  \\
		\gamma_p + m_j &=  m_{j+1}+ qm_p +(v-1)n, \text{ for all } j \in [1,p-1],  \\
		\gamma_p + m_p &= (\lambda - 1)m_0 + m_{p-1} + (w+v-1)n.
	\end{align*}
	If $\mu > 1,$ then $\gamma_p + n = (\mu-1)m_0 + m_1 + m_{r_z} + (q + q_z)m_p,$ and
	if $\mu = 0,$ then $\gamma_p + n = (\lambda-1)m_0 + m_{p-1} + m_{r_{z}} + ( q_{z}-1)m_p$ . Hence the result follows.
\end{proof}

\begin{example} 
{\rm Let $H = \langle m_0 = 10, m_1 = 19, m_2 =28, m_3 = 37, n= 35 \rangle.$ Then,
	$p = 3,$ $u=5,$ $v=2,$ $w=1,$ $z=0,$ $\lambda=3,$ $\mu=7,$ $\nu=10,$ $r=2,$ $q=1.$ Since $z=0$ and $r = 2 $, therefore $W=\emptyset$ and $\PF(H) = \{\gamma_3\} = \{81\}.$\\
	\[ \RF(H) =
	\left[
	\begin{array}{c c c c c }
		-1 & 1 & 0 & 1 & 1\\
		0 & -1 & 1 & 1 & 1\\
		0 & 0 & -1 & 2 & 1\\
		2 & 0 & 1 & -1 & 2\\
		6 & 1 & 0 & 1 & -1
	\end{array}
	\right].
	\]
	}
\end{example}

\section{The case $W \neq \emptyset$}

Throughout this section we assume $W \neq \emptyset$ and $H = \langle m_0,m_1,\ldots,m_p,n \rangle$, where $m_i = m_0 + id$ for $i \in [1,p]$ and $\gcd(m_0,n,d)=1.$ In this case, there are two types of pseudo-Frobenius numbers. We denote them by $\PF_1(H)$ and $\PF_2(H).$\\

$\bullet$ Let $\alpha_i = g_{(q'-1)p+i}+(v-1)n-m_0.$ Then $\PF_1(H) \subset \{\alpha_i ; ~i \in I\}$, where\[ I= \begin{cases} 
      \{p\} & \text{ if } r'=1 \text{ and } q'=0,\\
      [1,p] & \text{ if } r'=1 \text{ and } q'>0,\\
      [p+1,p+r'-1]& \text{ if } r'\geq 2. 
   \end{cases}
\]

$\bullet$  Let $\beta_j = g_{(q-1)p+r+j-1}+(v-w-1)n-m_0$ for $j \in [1,p].$ Then,
 \[ \PF_2(H) \subset \begin{cases} 
      \big\{ \beta_j \mid j \in [1,p] \big\} & \text{ if } r=1,\\
      \big\{ \beta_j \mid j \in [p-r+2,p] \big\} & \text{ if } r\geq 2.
   \end{cases}
\]
For the complete description of $\PF_1(H)$ and $\PF_2(H)$, see {\cite[Propositions 4.6 and 4.7]{patil-sengupta}} and {\cite[Propositions 5.6, 5.7, 5.8, 5.9]{patil-sengupta}}.

\subsection{Structure of $\RF$-matrices for pseudo-Frobenius numbers in $\PF_1(H)$}

\begin{proposition}\label{r'=1,mu>0,q'=0}
	Let $r'=1, \mu >0$ and $q'=0.$ Then $\PF_1(H) = \{\alpha_p\}$ and there exists an $\RF(\alpha_p)$ of the following type
	\[ \RF(\alpha_p) =
	\left[
	\begin{array}{c c c c c c c c c }
		-1 & 0 & 0 & 0 & \cdots & \cdots & 0 & 0 & v-1\\
		\nu -1 & -1 & 0 & 0  & \cdots & \cdots & 0 & 0 & w-1\\
		\nu -2 & 1 & -1 & 0 & \cdots & \cdots & 0 & 0 & w-1\\
		\nu-2 & 0 & 1 & -1 & \cdots & \cdots & 0 & 0 &w-1\\
		\vdots & \vdots & \vdots & \vdots  &\vdots & \vdots & \vdots & \vdots & \vdots\\
		\vdots & \vdots & \vdots & \vdots  &\vdots & \vdots & \vdots & \vdots & \vdots\\
		\nu -2 & 0 & 0 & 0 & \cdots & \cdots & 1 & -1 & w-1\\
		a_{p+2,1} & a_{p+2,2} & a_{p+2,3} & \cdots & \cdots &\cdots & \cdots & a_{p+2,p+1} & -1
	\end{array}
	\right],
	\]
	where $(a_{p+2,1},\ldots,a_{p+2,p+1},-1) = (\mu -1,0,\cdots,0,q_z,-1)+ (a^{(0)},\ldots,a^{(r_z)},\ldots,a^{(n)})$ such that $a^{(r_z)} = 1$ and all other $a^{(i)}$'s equal to zero.
\end{proposition}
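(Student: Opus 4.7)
The plan is to verify, one row at a time, that each row of the claimed $(p+2) \times (p+2)$ matrix records a valid factorization of $\alpha_p + m_j$ for $j \in [0,p]$ (respectively of $\alpha_p + n$), as required by the definition of an $\RF$-matrix. Under the hypotheses $r' = 1$, $q' = 0$, the subscript $(q'-1)p + p$ of $g$ collapses to $0$, and with the natural convention $g_0 = 0$ (consistent with writing $0 = -1\cdot p + p$ and $g_0 = -m_p + m_p$), one has $\alpha_p = (v-1)n - m_0$. Thus $\alpha_p + m_0 = (v-1)n$, yielding the first row immediately.

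For the rows indexed $1$ through $p$, the essential identity is Lemma \ref{P-I}(iii). In the present situation $g_{u-z} = g_{q'p + r'} = g_1 = m_1$, so that identity reads $\nu m_0 = m_1 + (v-w)n$. Substituting, $\alpha_p + m_1 = -m_0 + (v-1)n + m_1 = -m_0 + (v-1)n + \nu m_0 - (v-w)n = (\nu - 1)m_0 + (w-1)n$, which is the claimed second row. For each subsequent $j \in [2,p]$, I would chain $m_j - m_{j-1} = d = m_1 - m_0$ to obtain
\[
\alpha_p + m_j = (\alpha_p + m_1) + (m_{j-1} - m_0) = (\nu - 2)m_0 + m_{j-1} + (w-1)n,
\]
which is precisely the $j$-th row (with $-1$ at position $j$ and $+1$ at position $j-1$).

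The last row follows from Lemma \ref{P-I}(ii): $vn = \mu m_0 + g_z$, so $\alpha_p + n = vn - m_0 = (\mu-1)m_0 + g_z = (\mu-1)m_0 + q_z m_p + m_{r_z}$, matching the displayed decomposition of the last row into the deterministic part and $R_{(n)}$ (the $+1$ in column $r_z$ comes from $R_{(n)}$; the $q_z$ in the $m_p$-column comes from the deterministic summand). The remaining check is that all entries are nonnegative integers: $w - 1 \geq 0$ holds because $W \neq \emptyset$ forces $w \geq 1$; $\mu - 1 \geq 0$ is the hypothesis; and $\nu - 2 \geq 0$ follows from $\nu = \lambda + \mu + \epsilon$ with $\lambda \geq 1$ and $\mu \geq 1$. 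No step is deep; the main conceptual obstacle is navigating the boundary case $q' = 0$ carefully and keeping the convention $g_0 = 0$ consistent throughout, after which the proof is a direct application of parts (ii) and (iii) of Lemma \ref{P-I}.
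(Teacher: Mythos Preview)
Your proof is correct and follows essentially the same route as the paper's: both compute $\alpha_p + m_0 = (v-1)n$ via $g_0 = 0$, both obtain the rows for $j \in [1,p]$ by substituting $(v-w)n = \nu m_0 - g_{u-z}$ with $g_{u-z} = m_1$ (you split off $j=1$ and chain the arithmetic-progression identity for $j \geq 2$, whereas the paper handles all $j$ at once, but this is the same computation), and both derive the last row from $vn = \mu m_0 + g_z$. Your added nonnegativity checks ($w \geq 1$ from $W \neq \emptyset$, $\nu \geq 2$ from $\lambda, \mu \geq 1$) are a welcome explicitation that the paper omits.
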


\begin{proof}
	Since $r'=1$ and $q'=0$, we get $\PF_1(H)=\{\alpha_p\}.$ Also, $u-z = q'p+r'=1.$\\
	(i) $\alpha_p + m_0 = g_0 + (v-1)n = (v-1)n.$ \\
	(ii) For $j\in[1,p]$, 
	\begin{align*}
		\alpha_p + m_j
		= (v-1)n-m_0+m_j
		&=(v-w)n+(w-1)n-m_0+m_j\\
		&=\nu m_0 - g_{(u-z)}+(w-1)n-m_0+m_j \\
		&=(\nu -2)m_0+m_{j-1}+(w-1)n.
	\end{align*}
	(iii) $\alpha_p + n = (v-1)n-m_0+n = (\mu-1)m_0 + g_z = (\mu -1)m_0 + q_zm_p + m_{r_z}.$\\
	Hence, the result follows.
\end{proof}

\begin{example}
{\rm Let $H = \langle m_0 = 11, m_1 = 13, m_2 =15, m_3 = 17, m_4 = 19, n= 21 \rangle.$ Then,\\
	$p=4,u=5,v=3,w=1,z=4,\lambda=1, \mu=4, \nu=5, r=1, r'=1, q=1, q'=0. $ Since $r' = 1, q'=0$, we get $\PF_1(H) = \{\alpha_4\} =\{31\}$ and
	\[ \RF(31) =
	\left[
	\begin{array}{c c c c c c}
		-1 & 0 & 0 & 0 & 0 & 2\\
		4 & -1 & 0 & 0 & 0 & 0\\
		3 & 1 & -1 & 0 & 0 & 0\\
		3 & 0 & 1 & -1 & 0 & 0\\
		3 & 0 & 0 & 1 & -1 & 0\\
		3 & 0 & 0 & 0 & 1 & -1
	\end{array}
	\right].
	\]
	}
\end{example}

\begin{proposition}\label{r'=1,mu >0,q'>0}
	Let $r'=1,\mu >0$ and $q'>0.$ Then for each $\alpha_i \in \PF_1(H)$, $ i \in [1,p]$, there exists an $\RF(\alpha_i)$ of the following type
	\begin{multline*}
		\RF(\alpha_i)
		=
		\begin{bmatrix}
			-1 & 0 & 0 & \cdots& \cdots & \cdots & 0 & q'-1 & v-1\\
			0 & -1 & 0 & \cdots & \cdots & \cdots & 0 & q'-1 & v-1\\
			\vdots & \vdots & \vdots & \vdots & \vdots & \vdots  & \vdots  & \vdots  & \vdots \\
			0 & 0 & \cdots & -1 & 0 & \cdots & 0 & q'-1 & v-1\\
			\nu -2 & 0 & 0 & \cdots & -1 & 0 & \cdots & 0 & w-1\\
			\vdots & \vdots & \vdots & \vdots & \vdots & \vdots  & \vdots  & \vdots  & \vdots \\
			\nu-2 & 0 & 0 & \cdots & \cdots & \cdots & 0 & -1 & w-1\\
			\mu-1 & 0 & 0 & \cdots & \cdots &\cdots & \cdots & q'+q_z-1 & -1\\
		\end{bmatrix}
		+
		\begin{bmatrix}
			R_{(0)} \\ R_{(1)} \\ \vdots \\  R_{(p-i)} \\ R_{(p-i+1)}\\ \vdots \\  R_{(p)} \\ R_{(n)}\\
		\end{bmatrix},
	\end{multline*}
	where 
	\begin{enumerate}
		\item $R_{(0)} = (a^{(0)},\ldots,a^{(i)},\ldots,a^{(n)})$ such that $a^{(i)} = 1$ and all other $a^{(j)}$'s equal to zero,
		
		\item for all $j \in [1,p-i]$, $R_{(j)} = (a^{(0)},\ldots,a^{(i+j)},\ldots,a^{(n)})$  such that $a^{(i+j)} = 1$ and all other $a^{(j)}$'s equal to zero,
		
		\item for all $j \in [p-i+1,p]$, $R_{(j)} = (a^{(0)},\ldots,a^{(i+j-p-1)},\ldots,a^{(n)})$  such that $a^{(i+j-p-1)} = 1$ and all other $a^{(j)}$'s equal to zero,
		
		\item $R_{(n)} = (a^{(0)},\ldots,a^{(i)},\ldots,a^{(r_z)},\ldots,a^{(n)})$ such that $a^{(r_z)},a^{(i)} = 1$ and all other $a^{(j)}$'s equal to zero.
	\end{enumerate}
\end{proposition}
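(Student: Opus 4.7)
The proof will parallel those of Propositions \ref{r=1,L>1}, \ref{r=1,L=1}, and \ref{r geq 2}: I will compute the factorizations of $\alpha_i + m_j$ for each $j \in [0,p]$ and of $\alpha_i + n$, then arrange these factorizations as the rows of an $\RF$-matrix. The starting point is the identity $\alpha_i + m_0 = g_{(q'-1)p+i} + (v-1)n = (q'-1)m_p + m_i + (v-1)n$, which yields $R_{(0)}$ directly after noting that $r_{(q'-1)p+i} = i$ and $q_{(q'-1)p+i} = q'-1$.

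For $j \in [1, p-i]$, the index condition $r_{(q'-1)p+i} + r_j = i+j \leq p$ lets me apply Lemma \ref{P-I}(2) with $\epsilon = 1$ to get $g_{(q'-1)p+i} + g_j = m_0 + g_{(q'-1)p+i+j}$, so that
\[
\alpha_i + m_j = g_{(q'-1)p+i+j} + (v-1)n = (q'-1)m_p + m_{i+j} + (v-1)n,
\]
matching the claimed row. For $j \in [p-i+1, p]$, the sum $i+j$ exceeds $p$, so Lemma \ref{P-I}(2) gives $\epsilon = 0$ and $g_{(q'-1)p+i} + g_j = g_{(q'-1)p+i+j} = q' m_p + m_{i+j-p}$. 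The resulting $\alpha_i + m_j = q' m_p + m_{i+j-p} + (v-1)n - m_0$ contains a negative $m_0$; this is absorbed using $r' = 1$ together with the identity $g_{u-z} = q' m_p + m_1$ and the relation $g_{u-z} + (v-w)n = \nu m_0$ from Lemma \ref{P-I}(1)(iii). Explicitly, since $m_{i+j-p} = m_1 + m_{i+j-p-1} - m_0$ (arithmetic progression), one rewrites $q' m_p + m_{i+j-p} = \nu m_0 - (v-w)n + m_{i+j-p-1} - m_0$, and then
\[
\alpha_i + m_j = (\nu - 2)m_0 + m_{i+j-p-1} + (w-1)n,
\]
which yields the desired $R_{(j)}$ since $i+j-p-1 \in [0, i-1]$. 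Finally, for $\alpha_i + n$ I invoke condition (ii) of Lemma \ref{P-I}(1), namely $vn = \mu m_0 + g_z = \mu m_0 + q_z m_p + m_{r_z}$, to conclude
\[
\alpha_i + n = (\mu - 1)m_0 + m_i + m_{r_z} + (q' + q_z - 1)m_p,
\]
giving the last row $R_{(n)}$.

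To close the argument I must verify all off-diagonal entries are non-negative. The entry $\nu - 2$ is non-negative because $\lambda \geq 1$, $\mu \geq 1$, and $\nu = \lambda + \mu + \epsilon$; the entry $w-1$ is non-negative because $W \neq \emptyset$ forces $w \geq 1$; the entry $q' + q_z - 1$ is non-negative since $q' \geq 1$ by hypothesis. I expect the main technical obstacle to be the absorption step in the range $j \in [p-i+1, p]$, where the $-m_0$ produced by the $\epsilon = 0$ case of Lemma \ref{P-I}(2) must be traded against the defining relation of $\nu$ via the specific feature $r' = 1$; the arithmetic-progression identity $m_{i+j-p} - m_0 = m_{i+j-p-1} + m_1 - 2m_0$ is what makes the index shift land correctly at $i+j-p-1$ so that the row structure described in item (3) is recovered.
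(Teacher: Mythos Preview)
Your proof is correct and follows essentially the same approach as the paper: compute $\alpha_i+m_j$ for $j\in[0,p]$ and $\alpha_i+n$ directly from the defining relations in Lemma~\ref{P-I}, and read off the rows. The only cosmetic difference is in step~(3): the paper reindexes $g_{(q'-1)p+i+j}$ as $g_{u-z+(i+j-p-1)}$ and splits off $g_{u-z}$ via Lemma~\ref{P-I}(2) before invoking $g_{u-z}+(v-w)n=\nu m_0$, whereas you reach the same endpoint by first expanding $g_{(q'-1)p+i+j}=q'm_p+m_{i+j-p}$ and then using the arithmetic-progression identity $m_{i+j-p}=m_1+m_{i+j-p-1}-m_0$ to recognise $q'm_p+m_1=g_{u-z}$. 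Your added verification that the off-diagonal entries $\nu-2$, $w-1$, and $q'+q_z-1$ are non-negative is a welcome detail the paper leaves implicit.
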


\begin{proof} 
	Since $\mu>0$ and $q'>0,$ we have $\PF_1(H) = \{\alpha_i \mid i\in [1,p]\}$.\\
	
	(1) $\alpha_i +m_0= g_{(q'-1)p+i} + (v-1)n=(q'-1)m_p + m_i + (v-1)n.$\\
	
	(2) If $j \in [1,p-i],$ then $i+j \leq p$ and 
	\begin{align*}
		\alpha_i +m_j
		= g_{(q'-1)p+i} + (v-1)n -m_0 +m_j
		&=(q'-1)m_p+m_i + (v-1)n -m_0 +m_j\\
		&=(q'-1)m_p+m_{i+j} + (v-1)n.
	\end{align*}
	
	(3) If $j \in [p-i+1,p],$ then $i+j > p$ and
	\begin{align*}
		\alpha_i +m_j
		= g_{(q'-1)p+i} + (v-1)n -m_0 +m_j  &=g_{(q'-1)p+i+j} + (v-1)n -m_0 \\
		&= g_{u-z-r'-p+i+j} + (v-w)n + (w-1)n-m_0\\ 
		&= g_{i+j-p-1}+(\nu-2)m_0+(w-1)n\\
		&=(\nu-2)m_0+m_{i+j-p-1}+(w-1)n.
	\end{align*}
	
	(4) Now,
	\begin{align*}
		\alpha_i +n
		= g_{(q'-1)p+i} + vn -m_0
		&= g_{(q'-1)p+i} + \mu m_0 +g_z -m_0\\ 
		&= (q'+q_z-1)m_p + (\mu -1)m_0 + m_i + m_{r_z}.
		\qedhere
	\end{align*}
\end{proof}

Note that if $p=1$ and $\PF_1(H) \neq \emptyset,$ then $\PF_1(H) = \{\alpha_1\}.$

\begin{coro}\label{p=1,PF_1(H)}
	Suppose $p=1$ and $\PF_1(H) \neq \emptyset.$ Then there exists an $\RF$-matrix of $\alpha_1$ of the type
	\[
	\begin{bmatrix}
		-1 &  u-z-1 & v-1     \\
		\nu-1 &  -1 & w-1    \\
		\mu-1 & u-1 & -1 
	\end{bmatrix}
	,\quad \text{ or }
	\begin{bmatrix}
		-1  &  0 & v-1      \\
		\nu-1  &  -1   & w-1  \\
		\mu-1 & z & -1 
	\end{bmatrix}.
	\]
\end{coro}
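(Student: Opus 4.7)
The plan is to obtain both matrices by direct specialization of Propositions \ref{r'=1,mu>0,q'=0} and \ref{r'=1,mu >0,q'>0} to the case $p=1$, with no new computation beyond what is already in those proofs.

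First, I would record the substantial simplifications forced by $p=1$: every $t\geq 1$ has $r_t = 1$ and $q_t = t-1$, whence $r' = r_z = 1$, $q' = u-z-1$, and $q_z = z-1$. Because $r' = 1$, the index set $I$ appearing in the description of $\PF_1(H)$ from Section 4 collapses to $\{1\}$, regardless of whether $q'=0$ or $q'>0$. Hence $\PF_1(H) \neq \emptyset$ pins down $\PF_1(H) = \{\alpha_1\}$, and there is only one $\RF$-matrix to describe.

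Next, I would split on the value of $q'$. If $q' > 0$, apply Proposition \ref{r'=1,mu >0,q'>0} with $p = i = 1$; the $m_0$- and $m_1$-rows can be read off at once after substituting $q' = u-z-1$, yielding $(-1,\, u-z-1,\, v-1)$ and $(\nu-1,\, -1,\, w-1)$ respectively. If $q' = 0$, apply Proposition \ref{r'=1,mu>0,q'=0} instead; again the first two rows are immediate, the $m_0$-row simplifying to $(-1,\, 0,\, v-1)$ because $g_0 = 0$.

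The one place that needs genuine attention, and which I expect to be the main obstacle, is the $n$-row in the case $q'>0$. The generic statement of Proposition \ref{r'=1,mu >0,q'>0} places two separate unit contributions in $R_{(n)}$, namely $a^{(i)} = 1$ and $a^{(r_z)} = 1$. For $p=1$ these two columns collide (both equal the $m_1$-column), so the two contributions must be added rather than recorded separately, producing $(q'+q_z-1)+2 = u-1$ in the $m_1$-entry and hence the row $(\mu-1,\, u-1,\, -1)$. In the $q'=0$ case the analogous $R_{(n)}$ contribution is single and yields the simpler substitution $q_z+1 = z$, giving the row $(\mu-1,\, z,\, -1)$. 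With this collision handled the two matrices in the statement follow directly.
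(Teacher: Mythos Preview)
Your proposal is correct and follows exactly the same route as the paper's proof, which simply invokes Propositions~\ref{r'=1,mu>0,q'=0} and~\ref{r'=1,mu >0,q'>0} after noting $r_t=1$, $q_t=t-1$ for $p=1$. Your explicit handling of the column collision in $R_{(n)}$ (where $a^{(i)}$ and $a^{(r_z)}$ both land in the $m_1$-column and must be summed) is a detail the paper leaves implicit, but it is the right observation and the only place requiring care.
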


\begin{proof}
	Since $p=1,$ we get $r_t = 1, q_t = t-1$ for all $t.$ Now use Propositions \ref{r'=1,mu>0,q'=0} and \ref{r'=1,mu >0,q'>0} to get desired $\RF$-matrices accordingly as $q'>0$ or $q'=0$. 
\end{proof}

\begin{proposition}\label{r'=1,mu=0,q'>0,r geq 2}
	Let $r'=1,\mu =0,q'>0$ and $r\geq 2$. Then for every element of $\PF_1(H) = \{\alpha_i \mid i\in [1,p-r+1]\}$, there exists an $\RF(\alpha_i)$ of the following type
	\begin{multline*}
		\RF(\alpha_i)
		=
		\begin{bmatrix}
			-1 & 0 & 0 & \cdots& \cdots & \cdots & 0 & q'-1 & v-1\\
			0 & -1 & 0 & \cdots & \cdots & \cdots & 0 & q'-1 & v-1\\
			\vdots & \vdots & \vdots & \vdots & \vdots & \vdots  & \vdots  & \vdots  & \vdots \\
			0 & 0 & \cdots & -1 & 0 & \cdots & 0 & q'-1 & v-1\\
			\nu -2 & 0 & 0 & \cdots & -1 & 0 & \cdots & 0 & w-1\\
			\vdots & \vdots & \vdots & \vdots & \vdots & \vdots  & \vdots  & \vdots  & \vdots \\
			\nu-2 & 0 & 0 & \cdots & \cdots & \cdots & 0 & -1 & w-1\\
			0 & 0 & 0 & \cdots & \cdots &\cdots & 0 & q-1 & -1\\
		\end{bmatrix}
		+
		\begin{bmatrix}
			R_{(0)} \\ R_{(1)} \\ \vdots \\  R_{(p-i)} \\ R_{(p-i+1)}\\ \vdots \\  R_{(p)} \\ R_{(n)}\\
		\end{bmatrix},
	\end{multline*}
	where
	\begin{enumerate}
		\item $R_{(0)} = (a^{(0)},\ldots,a^{(i)},\ldots,a^{(n)})$ such that $a^{(i)} = 1$ and all other $a^{(j)}$'s equal to zero,
		
		\item for all $j \in [1,p-i]$, $R_{(j)} = (a^{(0)},\ldots,a^{(i+j)},\ldots,a^{(n)})$ such that $a^{(i+j)} = 1$ and all other $a^{(j)}$'s equal to zero,
		
		\item For all $j \in [p-i+1,p]$, $R_{(j)} = (a^{(0)},\ldots,a^{(i+j-p-1)},\ldots,a^{(n)})$ such that $a^{(i+j-p-1)} = 1$ and all other $a^{(j)}$'s equal to zero.
		
		\item $R_{(n)} = (a^{(0)},\ldots,a^{(r+i-1)},\ldots,a^{(n)})$ such that $a^{(r+i-1)} = 1$ and all other $a^{(j)}$'s equal to zero.
	\end{enumerate}
\end{proposition}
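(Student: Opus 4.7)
My plan is to compute explicit factorizations of $\alpha_i + m_j$ for each $j \in [0, p]$ and of $\alpha_i + n$ in $H$, then read the $(j+1)$-th row of $\RF(\alpha_i)$ off the $j$-th factorization, exactly mirroring the template of Propositions \ref{r=1,L>1} and \ref{r geq 2}. As the starting point, I will use the expansion $g_{(q'-1)p + i} = (q' - 1) m_p + m_i$ (valid since $i \in [1, p - r + 1] \subset [1, p]$) to write $\alpha_i = (q' - 1) m_p + m_i + (v - 1) n - m_0$.

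For $j = 0$ the factorization is immediate, and for $j \in [1, p - i]$ I will apply Lemma \ref{P-I}(2) with $r_i + r_j = i + j \leq p$, which gives $g_i + g_j = m_0 + g_{i + j}$ and absorbs the stray $-m_0$, yielding the required middle rows. For $j \in [p - i + 1, p]$ the index sum satisfies $i + j > p$, so Lemma \ref{P-I}(2) now gives $g_i + g_j = g_{i + j}$ with no $m_0$; I will then invoke Lemma \ref{P-I}(1)(iii) and $u - z = q'p + 1$, rewritten as $q' m_p + m_1 = \nu m_0 - (v - w) n$, together with the arithmetic-progression identity $m_a + m_b = m_0 + m_{a + b}$ valid when $a + b \leq p$, to absorb the remaining $q' m_p$ term and produce $\alpha_i + m_j = (\nu - 2) m_0 + m_{i + j - p - 1} + (w - 1) n$, with the $m_{i + j - p - 1}$ term collapsing to an extra $m_0$ in the boundary case $j = p - i + 1$. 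Non-negativity of $w - 1$ is guaranteed because $W \neq \emptyset$ forces $w \geq 1$.

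The main obstacle will be the last row, corresponding to $\alpha_i + n$, where the hypothesis $\mu = 0$ enters crucially. By Lemma \ref{P-I}(1)(ii) one has $v n = g_z$, so $\alpha_i + n = g_{(q' - 1) p + i} + g_z - m_0$. To apply Lemma \ref{P-I}(2) cleanly, I will first derive the division data for $z$ by solving $u = q p + r$ against $u - z = q' p + 1$ to obtain $z = (q - q') p + (r - 1)$; since $r \geq 2$ forces $r - 1 \in [1, p - 1]$, this is the canonical form yielding $q_z = q - q'$ and $r_z = r - 1$. I will then verify $r_{(q' - 1) p + i} + r_z = i + r - 1 \leq p$ (using $i \leq p - r + 1$) so that Lemma \ref{P-I}(2) fires with $\epsilon = 1$ and produces $g_{(q' - 1) p + i} + g_z = m_0 + g_{(q - 1) p + (i + r - 1)} = m_0 + (q - 1) m_p + m_{r + i - 1}$. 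Subtracting $m_0$ then yields $\alpha_i + n = (q - 1) m_p + m_{r + i - 1}$, which is precisely the last row of the stated matrix. The subtle points are the correct identification $q_z = q - q'$ and the range check $i + r - 1 \leq p$; once these are in hand, the remaining bookkeeping is mechanical.
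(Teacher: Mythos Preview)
Your proposal is correct and follows essentially the same route as the paper: parts (1)--(3) reproduce verbatim the computations the paper borrows from Proposition~\ref{r'=1,mu >0,q'>0}, and for part (4) both you and the paper use $\mu=0\Rightarrow vn=g_z$ and then apply Lemma~\ref{P-I}(2) (with the check $i+r_z=i+r-1\le p$) to collapse $g_{(q'-1)p+i}+g_z-m_0$ into $g_{(q-1)p+r+i-1}=(q-1)m_p+m_{r+i-1}$. Your explicit identification $q_z=q-q'$, $r_z=r-1$ and the remark that $W\neq\emptyset$ forces $w\ge 1$ are correct side observations that the paper leaves implicit.
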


\begin{proof}
	For (1),(2) and (3), see the proof of Proposition \ref{r'=1,mu >0,q'>0}. \\
	
	(4) Since $r \geq 2$ and $r'=1,$ we get $r_z=r-1.$ Also, we have $i \in [1,p-r+1]$ and hence $i+ r_z \leq p.$ Therefore,
	\begin{align*}
		\alpha_i +n
		= g_{(q'-1)p+i} + vn -m_0
		&= g_{(q'-1)p+i} +g_z -m_0\\
		&=g_{u-z-p+i-1} +g_z -m_0 \\
		&= g_{u-p+i-1} \\
		&= g_{(q-1)p + r+i-1} 
		=(q-1)m_p+m_{r+i-1}.
		\qedhere
	\end{align*}
\end{proof}

Note that $\PF(H) = \emptyset$ in the following cases and so we proceed to the case $r' \geq 2.$
\begin{align*}
	{\rm(1)} \; r'=1, \mu=0 \text{ and } q'=0,  \qquad {\rm(2)} \; r'=1, \mu=0, q'>0, \text{ and } r=1.
\end{align*}

\begin{proposition}\label{r' geq 2}
	Let $r'\geq 2$. Then for each $\alpha_i \in \PF_1(H)$, $i \in [p+1, p+r'-1]$, there exists an $\RF(\alpha_i)$ of the following type
	\begin{multline*}
		\RF(\alpha_i)
		=
		\begin{bmatrix}
			-1 & 0 & 0 & \cdots& \cdots & \cdots & 0 & q' & v-1\\
			0 & -1 & 0 & \cdots & \cdots & \cdots & 0 & q' & v-1\\
			\vdots & \vdots & \vdots & \vdots & \vdots & \vdots  & \vdots  & \vdots  & \vdots \\
			0 & 0 & \cdots & -1 & 0 & \cdots & 0 & q' & v-1\\
			\nu -1 & 0 & 0 & \cdots & -1 & 0 & \cdots & 0 & w-1\\
			\vdots & \vdots & \vdots & \vdots & \vdots & \vdots  & \vdots  & \vdots  & \vdots \\
			\nu-1 & 0 & 0 & \cdots & \cdots & \cdots & 0 & -1 & w-1\\
			a_{p+2,1} & a_{p+2,2} & 0 & \cdots & \cdots &\cdots & \cdots & a_{p+2,p+1} & -1 \\
		\end{bmatrix}
		+
		\begin{bmatrix}
			R_{(0)} \\ R_{(1)} \\ \vdots \\  R_{(2p-i)} \\ R_{(2p-i+1)}\\ \vdots \\  R_{(p)} \\ R_{(n)}\\
		\end{bmatrix},
	\end{multline*}
	where $\theta_i \in [1,r'-1]$ such that $i=p+\theta_i,$
	\begin{enumerate}
		\item $R_{(0)} = (a^{(0)},\ldots,a^{(\theta_i)},\ldots,a^{(n)})$ such that $a^{(\theta_i)} = 1$ and all other $a^{(j)}$'s equal to zero,
		
		\item for all $j \in [1,2p-i]$, $R_{(j)} = (a^{(0)},\ldots,a^{(\theta_i+j)},\ldots,a^{(n)})$ such that $a^{(\theta_i+j)} = 1$ and all other $a^{(j)}$'s equal to zero,
		
		\item for all $j \in [2p-i+1,p]$, $R_{(j)} = (a^{(0)},\ldots,a^{(\theta_i+j-r')},\ldots,a^{(n)})$ such that $a^{(\theta_i+j-r')} = 1$ and all other $a^{(j)}$'s equal to zero,
		
		\item if $\mu > 0$, then $(a_{p+2,1},a_{p+2,2},\ldots,a_{p+2,p+1},-1)= (\mu-1, 0,\ldots,0, q'+q_z,-1)$ and $R_{(n)} = (a^{(0)},\ldots,a^{(\theta_i)},\ldots,a^{(r_z)},\ldots,a^{(n)})$ such that $a^{(r_z)},a^{(\theta_i)} = 1$ and all other $a^{(j)}$'s equal to zero. Otherwise, if $\mu = 0$ then  $(a_{p+2,1},a_{p+2,2},\ldots,a_{p+2,p+1},-1)=(0,\ldots,0,q'+q_z,-1)$ and $R_{(n)} = (a^{(0)},\ldots,a^{(\theta_i + r_z)},\ldots,a^{(n)})$ such that $a^{(\theta_i + r_z)}= 1$ and all other $a^{(j)}$'s equal to zero.
	\end{enumerate}
\end{proposition}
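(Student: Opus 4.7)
The strategy is to compute each of $\alpha_i + m_j$ for $j \in [0,p]$ and $\alpha_i + n$ as an explicit $\mathbb{N}$-linear combination of the generators of $H$, then read off each row of the proposed $\RF$-matrix. Writing $i = p + \theta_i$ with $\theta_i \in [1, r'-1]$ gives $(q'-1)p + i = q'p + \theta_i$ with $\theta_i \in [1,p]$, so $g_{(q'-1)p+i} = q' m_p + m_{\theta_i}$ and hence
\[
\alpha_i = q' m_p + m_{\theta_i} + (v-1)n - m_0.
\]
This identity drives every computation below.

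The row indexed by $m_0$ is immediate from the displayed formula. For $j \in [1, 2p-i]$ we have $\theta_i + j \leq p$, so Lemma \ref{P-I}(2) yields $m_{\theta_i} + m_j = m_0 + m_{\theta_i + j}$ and substitution recovers the stated row. For $j \in [2p-i+1, p]$ we have $\theta_i + j > p$, hence $m_{\theta_i} + m_j = g_{\theta_i + j} = m_p + m_{\theta_i + j - p}$; then applying the defining relation $q' m_p + m_{r'} + (v-w)n = \nu m_0$ from Lemma \ref{P-I}(1)(iii) to eliminate $q' m_p$, and using the arithmetic progression identity $m_p + m_{\theta_i + j - p} - m_{r'} = m_0 + m_{\theta_i + j - r'}$, produces
\[
\alpha_i + m_j = (\nu-1) m_0 + m_{\theta_i + j - r'} + (w-1)n.
\]
The bounds $\theta_i \leq r' - 1$ and $p - \theta_i + 1 \leq j \leq p$ guarantee $\theta_i + j - r' \in [1, p]$. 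This mirrors the argument in Proposition \ref{r geq 2}.

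For the row indexed by $n$, Lemma \ref{P-I}(1)(ii) gives $vn = \mu m_0 + g_z = \mu m_0 + q_z m_p + m_{r_z}$. If $\mu > 0$, direct substitution immediately yields
\[
\alpha_i + n = (\mu-1) m_0 + m_{\theta_i} + m_{r_z} + (q' + q_z) m_p,
\]
matching the stated form. If $\mu = 0$, the substitution leaves $\alpha_i + n = (q'+q_z) m_p + m_{\theta_i} + m_{r_z} - m_0$, and one must combine $m_{\theta_i} + m_{r_z} = m_0 + m_{\theta_i + r_z}$ to arrive at $(q'+q_z) m_p + m_{\theta_i + r_z}$; this is valid precisely when $\theta_i + r_z \leq p$.

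I expect the main obstacle to be verifying this last index bound in the $\mu = 0$ subcase. Using Lemma \ref{P-B}(2), if $r > r'$ then $r_z = r - r'$, so $\theta_i + r_z \leq (r'-1) + (r - r') = r - 1 \leq p - 1$, as required. If $r \leq r'$, then $r_z = p + r - r'$ and the bound is delicate; here I expect to combine Lemma \ref{P-B}(4) (which under $\mu = 0$ forces $z > p$ and $q' < q$) with the standing hypotheses $W \neq \emptyset$ and $r' \geq 2$, and to exploit the restrictions on the admissible range of $i$ in $\PF_1(H)$ coming from \cite[Propositions 4.6 and 4.7]{patil-sengupta}, thereby forcing $\theta_i \leq r' - r$ and hence $\theta_i + r_z \leq p$. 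Apart from this bookkeeping, the verification of all rows is routine manipulation of the three identities of Lemma \ref{P-I}(1).
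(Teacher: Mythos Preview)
Your plan matches the paper's proof almost exactly: both compute $\alpha_i + m_j$ and $\alpha_i + n$ directly from the relations in Lemma~\ref{P-I}(1), splitting at $j = 2p-i$ and treating the last row in the two subcases $\mu > 0$ and $\mu = 0$. One small slip: your arithmetic-progression identity should read $m_p + m_{\theta_i+j-p} - m_{r'} = m_{\theta_i+j-r'}$ (no extra $m_0$), though your displayed result $(\nu-1)m_0 + m_{\theta_i+j-r'} + (w-1)n$ is correct; as for the bound $\theta_i + r_z \leq p$ in the $\mu=0$ subcase, the paper simply asserts it without the bookkeeping you outline, so your more careful treatment there is an improvement rather than a deviation.
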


\begin{proof}
	Since $r' \geq 2$, we have $\PF_1(H) \subseteq \{\alpha_i \mid i \in [p+1,p+r'-1] \}.$\\
	
	(1) $\alpha_i +m_0= g_{(q'-1)p+i} + (v-1)n= g_{q'p+\theta_i} + (v-1)n=q'm_p + m_{\theta_i} + (v-1)n.$\\
	
	(2) If $j \in [1,2p-i]$, then $i+j \leq 2p$ and hence $\theta_i + j \leq p.$ Therefore,
	\begin{align*}
		\alpha_i +m_j
		= g_{(q'-1)p+i} + (v-1)n -m_0 +m_j
		&=g_{q'p+ \theta_i} + (v-1)n - m_0+g_j\\
		&=g_{q'p+\theta_i+j} + (v-1)n \\
		&= q'm_p + m_{\theta_i + j} + (v-1)n.
	\end{align*}
	
	(3) If $j \in [2p-i+1,p]$, then $i+j > 2p$. Therefore,
	\begin{align*}
		\alpha_i +m_j 
		= g_{q'p-p+i} + (v-1)n -m_0 +m_j 
		&= g_{u-z-r'-p+i} + g_j + (v-w)n+(w-1)n-m_0 \\
		&= g_{u-z} + g_{i+j-p-r'}+(v-w)n+(w-1)n-m_0 \\
		&= (\nu-1)m_0 + m_{i+j-p-r'}+(w-1)n.
	\end{align*}

	(4) Assume $\mu >0$. Then 
	\begin{align*}
		\alpha_i + n 
		= g_{(q'-1)p+i} + vn - m_0 
		&= g_{(q'-1)p+i} + (\mu-1)m_0 + g_z \\
		&= g_{q'p + \theta_i} + (\mu-1)m_0 + g_z\\ 
		&= (\mu-1)m_0 + m_{r_z} + m_{\theta_i} + (q_z + q')m_p.
	\end{align*}

	If $\mu = 0,$ then $vn = g_z$. Therefore,
	\begin{align*}
		\alpha_i + n 
		= g_{(q'-1)p+i} + vn - m_0 
		&= g_{q'p+\theta_i} + g_z - m_0 \\
		&= q'm_p + q_zm_p  + m_{\theta_i} + m_{r_z}-m_0.
	\end{align*}
	
	Since we have $r_z + \theta_i \leq p,$ this implies $\alpha_i + n = (q' + q_z)m_p + m_{\theta_i + r_z}.$
\end{proof}

\subsection{Structure of the $\RF$-matrices for pseudo-Frobenius numbers in $\PF_2(H)$}
\begin{proposition}\label{r=1,L>1,PF_2(H)}
	Let $r=1$ and $\lambda >1$. Then for each  $\beta_j \in \PF_2(H)$, $j \in [1,p]$, there exists an $\RF(\beta_j)$ of the following type
	\begin{multline*}
		\RF(\beta_j)
		=
		\begin{bmatrix}
			-1 & 0 & 0 & \cdots& \cdots & \cdots & 0 & q-1 & v-w-1\\
			0 & -1 & 0 & \cdots & \cdots & \cdots & 0 & q-1 & v-w-1\\
			\vdots & \vdots & \vdots & \vdots & \vdots & \vdots  & \vdots  & \vdots  & \vdots \\
			0 & 0 & \cdots & -1 & 0 & \cdots & 0 & q-1 & v-w-1\\
			\lambda -2 & 0 & 0 & \cdots & -1 & 0 & \cdots & 0 & v-1\\
			\vdots & \vdots & \vdots & \vdots & \vdots & \vdots  & \vdots  & \vdots  & \vdots \\
			\lambda-2 & 0 & 0 & \cdots & \cdots & \cdots & 0 & -1 & v-1\\
			\nu-\epsilon & 0 & \cdots & \cdots & \cdots &\cdots & 0 & q_{z+j-p-1} & -1\\
		\end{bmatrix}
		+
		\begin{bmatrix}
			R_{(0)} \\ R_{(1)} \\ \vdots \\  R_{(p-j)} \\ R_{(p-j+1)}\\ \vdots \\  R_{(p)} \\ R_{(n)}\\
		\end{bmatrix},
	\end{multline*}
	where $\epsilon =2$ or $1$ according as $r'+ r_{z-p+j-1} \leq p$ or  $r'+ r_{z-p+j-1} > p$ and
	\begin{enumerate}
		\item $R_{(0)} = (a^{(0)},\ldots,a^{(j)},\ldots,a^{(n)})$ such that $a^{(j)} = 1$ and all other $a^{(k)}$'s equal to zero,
		
		\item for all $k \in [1,p-j]$, $R_{(k)} = (a^{(0)},\ldots,a^{(j+k)},\ldots,a^{(n)})$ such that $a^{(j+k)} = 1$ and all other $a^{(k)}$'s equal to zero,
		
		\item for all $k \in [p-j+1,p]$, $R_{(k)} = (a^{(0)},\ldots,a^{(k+j-p-1)},\ldots,a^{(n)})$ such that $a^{(k+j-p-1)} = 1$ and all other $a^{(k)}$'s equal to zero,
		
		\item $R_{(n)} = (a^{(0)},\ldots,a^{(r_{z+j-p-1})},\ldots,a^{(n)})$ such that $a^{(r_{z+j-p-1})} = 1$ and all other $a^{(k)}$'s equal to zero.
	\end{enumerate} 
\end{proposition}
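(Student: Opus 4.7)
The plan is to follow the same template as Propositions \ref{r=1,L>1} and \ref{r geq 2}: produce, row-by-row, a concrete factorization of $\beta_j + m_i$ (for $i \in [0,p]$) and of $\beta_j + n$ in $H$, then observe that these factorizations are exactly what the displayed matrix prescribes. Throughout I will use Lemma \ref{P-I} and the additive rule $g_s + g_t = \epsilon m_0 + g_{s+t}$ of Lemma \ref{P-I}(2), together with the defining relations $g_u = \lambda m_0 + wn$, $vn = \mu m_0 + g_z$, and $g_{u-z} + (v-w)n = \nu m_0$. Since $r=1$, we have $u = qp + 1$ and $\beta_j = g_{(q-1)p+j} + (v-w-1)n - m_0$ with $q_{(q-1)p+j} = q-1$, $r_{(q-1)p+j} = j$.

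First I would handle $\beta_j + m_0$, which unfolds directly to $(q-1)m_p + m_j + (v-w-1)n$, giving the first row. Next, for $i \in [1,p-j]$, since $j + i \leq p$, Lemma \ref{P-I}(2) yields $g_{(q-1)p+j} + g_i = m_0 + g_{(q-1)p+j+i}$, so $\beta_j + m_i = (q-1)m_p + m_{j+i} + (v-w-1)n$, matching rows $2$ through $p-j+1$. For $i \in [p-j+1,p]$, one has $j+i > p$, so Lemma \ref{P-I}(2) gives $g_{(q-1)p+j} + g_i = g_{(q-1)p+j+i} = g_{u+(j+i-p-1)}$; applying the additive rule again (using $r_u + r_{j+i-p-1} = 1 + (j+i-p-1) \leq p$) together with $g_u = \lambda m_0 + wn$ produces $g_{(q-1)p+j+i} = (\lambda-1)m_0 + m_{j+i-p-1} + wn$, hence $\beta_j + m_i = (\lambda-2)m_0 + m_{j+i-p-1} + (v-1)n$. (The boundary case $i = p-j+1$, where $j+i-p-1 = 0$ and $g_0 = 0$, collapses to $(\lambda-1)m_0 + (v-1)n$, as required by $R_{(p-j+1)}$.)

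The interesting step is the final row. Using $(v-w)n = \nu m_0 - g_{u-z}$, I rewrite
\[
\beta_j + n = g_{(q-1)p+j} + (v-w)n - m_0 = (\nu - 1)m_0 + \bigl(g_{(q-1)p+j} - g_{u-z}\bigr).
\]
The index identity $(q-1)p + j - (u-z) = z + j - p - 1$ is the cue to invoke Lemma \ref{P-I}(2) on $g_{u-z} + g_{z+j-p-1}$: the sum equals $m_0 + g_{(q-1)p+j}$ when $r' + r_{z+j-p-1} \leq p$ and equals $g_{(q-1)p+j}$ otherwise. Substituting gives
\[
\beta_j + n = (\nu - \epsilon)m_0 + g_{z+j-p-1} = (\nu - \epsilon)m_0 + q_{z+j-p-1}\,m_p + m_{r_{z+j-p-1}},
\]
with $\epsilon = 2$ in the first case and $\epsilon = 1$ in the second, which is precisely the last row plus $R_{(n)}$.

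The main obstacle is this last row: one has to pick the right among the three defining identities for $(\lambda,\mu,\nu)$, namely $g_{u-z} + (v-w)n = \nu m_0$, and then correctly track the $\epsilon$ coming from Lemma \ref{P-I}(2); the other rows are essentially mechanical adaptations of the $W = \emptyset$ case in Proposition \ref{r=1,L>1}, with $(v-1)$ replaced by $(v-w-1)$ in the first block and $(w+v-1)$ replaced by $(v-1)$ in the second block. Once these factorizations are written down, it only remains to read off the coefficients and compare with the entries of the proposed $\RF(\beta_j)$, which is straightforward.
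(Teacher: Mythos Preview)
Your approach is the same as the paper's, and your row-by-row factorizations for $\beta_j + m_i$, $i \in [0,p]$, are correct and match the paper exactly (including the boundary case $i = p-j+1$). The final row is also handled via the same identity $(v-w)n = \nu m_0 - g_{u-z}$ that the paper uses.

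There is, however, one genuine gap in the last step. When you write $\beta_j + n = (\nu-1)m_0 + \bigl(g_{(q-1)p+j} - g_{u-z}\bigr)$ and then apply Lemma~\ref{P-I}(2) to $g_{u-z} + g_{z+j-p-1}$, you are implicitly assuming that $z + j - p - 1 \geq 0$, so that $g_{z+j-p-1}$ is defined. This is not automatic: since $W \neq \emptyset$ we only know $z \geq 1$, and a priori $j$ could be as small as $1$. The paper deals with this by a case split: if $z \geq p$ it is immediate, while if $z < p$ one has $q_z = 0$, $r_z = z = p + r - r' = p + 1 - r'$ (since $r = 1 \leq r'$), and then \cite[Proposition~5.7]{patil-sengupta} is invoked to show that in this situation $\PF_2(H) = \{\beta_j \mid j \in [r',p]\}$, so any $\beta_j$ actually lying in $\PF_2(H)$ satisfies $j \geq r' = p - z + 1$, hence $z + j - p - 1 \geq 0$. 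You need to supply this argument; otherwise the subtraction $g_{(q-1)p+j} - g_{u-z}$ cannot be rewritten as $g_{z+j-p-1} - \epsilon' m_0$ and the last row of your matrix is unjustified.
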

            
\begin{proof}
	Since $r=1,$ $\beta_j = g_{(q-1)p+j} + (v-w-1)n-m_0.$\\
	
	(1) $\beta_j + m_0 = (q-1)m_p + m_j + (v-w-1)n.$ \\
	
	(2) If $k \in [1,p-j],$ then $k+j \leq p.$ Therefore,
	\begin{align*}
		\beta_j + m_k 
		= g_{(q-1)p+j} + (v-w-1)n-m_0 + m_k 
		&= (q-1)m_p + m_{j+k} + (v-w-1)n.
	\end{align*}
	
	(3) If $k \in [p-j+1,p],$ then $k+j \geq p+1.$ Given $\lambda >1,$ if $k+j > p+1$ then
	\begin{align*}
		\beta_j + m_k
		= g_{(q-1)p+j} + (v-w-1)n - m_0 + m_k 
		&= g_{u+k+j-p-1} + (v-1)n + \lambda m_0 - g_u - m_0 \\
		&= (\lambda-2)m_0 + g_{j+k-p-1} + (v-1)n \\
		&= (\lambda-2)m_0 + m_{j+k-p-1} + (v-1)n.
	\end{align*}

	If $k+j = p+1,$ then $\beta_j + m_k = (\lambda -1)m_0 + (v-1)n.$\\
	
	(4) We consider two subcases $z\geq p$ and $z < p.$
	
	(a) Assume $z \geq p.$
	\begin{align*}
		\beta_j+n 
		= g_{(q-1)p+j} + (v-w-1)n-m_0 + n 
		&= g_{(q-1)p+j} + (\nu -1)m_0 - g_{u-z}\\
		&= g_{u-z+z-p+j-1} + (\nu -1)m_0 - g_{u-z}.
	\end{align*}

	Since $z \geq p,$ we have $z-p+j-1 \geq 0.$ Therefore,
	\begin{align*}
		\beta_j+n 
		= g_{z-p+j-1} - \epsilon m_0 +(\nu - 1)m_0
		&= (\nu - 1)m_0- \epsilon m_0 + m_{r_{z+j-p-1}}+q_{z-p+j-1}m_p,
	\end{align*}

	with $\epsilon =1$ or $0$ according as $r'+ r_{z-p+j-1} \leq p$ or  $r'+ r_{z-p+j-1} > p.$ \\
	
	(b) If $z < p,$ then $z = r_z = p+r-r' = p+1-r'.$ Also in this case $\PF_2(H) = \{ \beta_j \mid j \in [r',p] \}$ (see {\cite[Proposition 5.7]{patil-sengupta}}). Since $j \geq r'=p-z+1,$ we get $z-p+j-1 \geq 0.$ Hence we are done by subcase(a).
\end{proof}

\begin{proposition}\label{r=1,L=1,PF_2(H)}
	Let $r=1$ and $\lambda =1$. Then for each  $\beta_j \in \PF_2(H)$, $j \in [1,p]$, there exists an $\RF(\beta_j)$ of the following type
	\begin{multline*}
		\RF(\beta_j)
		=
		\begin{bmatrix}
			-1 & 0 & 0 & \cdots& \cdots & \cdots & 0 & q-1 & v-w-1\\
			0 & -1 & 0 & \cdots & \cdots & \cdots & 0 & q-1 & v-w-1\\
			\vdots & \vdots & \vdots & \vdots & \vdots & \vdots  & \vdots  & \vdots  & \vdots \\
			0 & 0 & \cdots & -1 & 0 & \cdots & 0 & q-1 & v-w-1\\
			\lambda -1 & 0 & 0 & \cdots & -1 & 0 & \cdots & 0 & v-1\\
			\nu-2 & 0 & 0 & \cdots & \cdots & \cdots & 0 & -1 & w-1\\
			\vdots & \vdots & \vdots & \vdots & \vdots & \vdots  & \vdots  & \vdots  & \vdots \\
			\nu-2 & 0 & 0 & \cdots & \cdots & \cdots & 0 & -1 & w-1\\
			\nu-\epsilon & 0 & \cdots & \cdots & \cdots &\cdots & 0 & q_{z+j-p-1} & -1\\
		\end{bmatrix}
		+
		\begin{bmatrix}
			R_{(0)} \\ R_{(1)} \\ \vdots \\  R_{(p-j)} \\ 0 \\ R_{(p-j+2)}\\ \vdots \\  R_{(p)} \\ R_{(n)}\\
		\end{bmatrix},
	\end{multline*}
	where $\epsilon =2$ or $1$ according as $r'+ r_{z-p+j-1} \leq p$ or  $r'+ r_{z-p+j-1} > p$ and
	\begin{enumerate}
		\item $R_{(0)} = (a^{(0)},\ldots,a^{(j)},\ldots,a^{(n)})$ such that $a^{(j)} = 1$ and all other $a^{(k)}$'s equal to zero,
		
		\item for all $k \in [1,p-j]$, $R_{(k)} = (a^{(0)},\ldots,a^{(j+k)},\ldots,a^{(n)})$ such that $a^{(j+k)} = 1$ and all other $a^{(k)}$'s equal to zero,
		
		\item for all $k \in [p-j+2,p]$, $R_{(k)} = (a^{(0)},\ldots,a^{(k+j-p-2)},\ldots,a^{(n)})$ such that $a^{(k+j-p-2)} = 1$ and all other $a^{(k)}$'s equal to zero,
		
		\item $R_{(n)} = (a^{(0)},\ldots,a^{(r_{z+j-p-1})},\ldots,a^{(n)})$ such that $a^{(r_{z+j-p-1})} = 1$ and all other $a^{(k)}$'s equal to zero.
	\end{enumerate}
\end{proposition}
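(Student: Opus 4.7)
The plan is to mirror the row-by-row proof of Proposition~\ref{r=1,L>1,PF_2(H)}, computing $\beta_j + m_k$ for each $k \in [0, p]$ and $\beta_j + n$ and reading off the rows of the $\RF$-matrix. Throughout, $r = 1$ gives $g_{(q-1)p+j} = (q-1)m_p + m_j$, and the hypothesis $\lambda = 1$ gives the pivotal relation $g_u = m_0 + wn$. The two easy ranges of $k$ are handled immediately: for $k \in [0, p-j]$, $r_j + r_k = j + k \leq p$ and Lemma~\ref{P-I}(2) give $g_j + g_k = g_{j+k}$, producing $\beta_j + m_k = (q-1)m_p + m_{j+k} + (v-w-1)n$ and so the first $p-j+1$ rows. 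For the borderline $k = p-j+1$, one has $r_j + r_k = p+1$, hence $g_j + g_k = g_{p+1} = m_p + m_1$; this yields $\beta_j + m_{p-j+1} = g_u - m_0 + (v-w-1)n$, and substituting $g_u = m_0 + wn$ collapses the expression to $(v-1)n$, precisely the special row with $R_{(p-j+1)} = 0$.

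The range $k \in [p-j+2, p]$, with $s := k+j-p \in [2, p]$, is the hard part. Direct computation (using $m_j + m_k = m_p + m_s$ and once more $g_u = m_0 + wn$) gives $\beta_j + m_k = m_{s-1} - m_0 + (v-1)n$, which is not yet a non-negative factorization. To reach the target form $(\nu-2)m_0 + m_{s-2} + (w-1)n$, one substitutes $(v-1)n = (\nu-1)m_0 + g_z - n$ (using $vn = \mu m_0 + g_z$ and $\mu = \nu - 1$, valid since $r = 1 \leq r'$ forces $\epsilon = 0$ in $\nu = \lambda + \mu + \epsilon$), rewrites $g_z = g_u - g_{u-z}$, and simplifies using the identity $g_{u-z} = m_1$ (equivalently $(q', r') = (0, 1)$), which follows from $\beta_j \in \PF_2(H)$ together with the description of $\PF_2(H)$ in \cite[Propositions~5.6--5.9]{patil-sengupta}. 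Establishing this last structural identity for every $j$ yielding a nonempty overflow range is the main obstacle in the proof.

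The $n$-row then follows exactly as in Proposition~\ref{r=1,L>1,PF_2(H)}: one writes $\beta_j + n = g_{(q-1)p+j} + (\nu-1)m_0 - g_{u-z}$ via $(v-w)n = \nu m_0 - g_{u-z}$, decomposes $(q-1)p+j = (u-z) + (z+j-p-1)$, and applies Lemma~\ref{P-I}(2) to obtain $\beta_j + n = (\nu-\epsilon)m_0 + q_{z+j-p-1}\,m_p + m_{r_{z+j-p-1}}$ with $\epsilon \in \{1, 2\}$ as stated. The subcase $z < p$ reduces to $z \geq p$ using the restriction $j \geq r'$ from the description of $\PF_2(H)$, which forces $z + j - p - 1 \geq 0$ so that the decomposition above makes sense.
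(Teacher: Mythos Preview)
Your row-by-row scheme matches the paper's: it too defers to Proposition~\ref{r=1,L>1,PF_2(H)} for $k\in[0,p-j]$, for $k=p-j+1$, and for the $n$-row, and only treats the overflow range $k\in[p-j+2,p]$ separately. The paper, however, simply asserts the formulas $\beta_j+m_k=(\nu-1)m_0+(w-1)n$ when $j+k=p+2$ and $\beta_j+m_k=(\nu-2)m_0+g_{j+k-p-2}+(w-1)n$ when $j+k>p+2$, with no derivation; it never mentions any auxiliary identity such as $g_{u-z}=m_1$.

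Your observation that the target factorization forces $g_{u-z}=m_1$ (equivalently $q'=0$, $r'=1$) is correct: starting from $\beta_j+m_k=m_{s-1}-m_0+(v-1)n$ and substituting $(v-1)n=(w-1)n+\nu m_0-g_{u-z}$, one lands on $(\nu-1)m_0+m_{s-1}-g_{u-z}+(w-1)n$, and this equals $(\nu-2)m_0+m_{s-2}+(w-1)n$ if and only if $g_{u-z}=m_1$. So you have isolated a genuine hidden hypothesis that the paper's two-line proof passes over in silence. Your proposed justification---that $(q',r')=(0,1)$ is forced by $\beta_j\in\PF_2(H)$ via \cite[Propositions~5.6--5.9]{patil-sengupta}---is asserted but not carried out, and you rightly flag it as ``the main obstacle.'' Note that the paper itself, in Section~5, exhibits symmetric examples with $r=1$, $\lambda=1$, $r'=2$, so the identity certainly does not follow from $r=1$ and $\lambda=1$ alone; any argument must genuinely use the membership $\beta_j\in\PF_2(H)$ for $j\ge 2$. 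In short: your approach coincides with the paper's, but you have gone further in exposing a step the paper leaves unjustified; completing your sketch would require actually extracting the constraint on $(q',r')$ from the cited propositions.
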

              
\begin{proof}
	Same as Proposition \ref{r=1,L>1,PF_2(H)} except when $j+k \geq p+2.$\\
	If $j+k = p+2,$ then $\beta_j + m_k = (\nu -1)m_0 + (w-1)n.$ \\
	If $j+k > p+2,$ then $\beta_j + m_k = (\nu -2)m_0 + g_{j+k-p-2} + (w-1)n.$
	Hence the result follows.
\end{proof}

Note that if $p=1,$ then $\PF_2(H)=\{\beta_1\}.$

\begin{coro}\label{p=1,PF_2(H)}
	Suppose $p=1$. Then there exists an $\RF$-matrix of $\beta_1$ of the following type
	\[
	\begin{bmatrix}
		-1 &  u-1 & v-w-1     \\
		\lambda-1 &  -1 & v-1    \\
		\nu-1 & z-1 & -1 
	\end{bmatrix}.
	\]
\end{coro}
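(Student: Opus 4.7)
The plan is to verify directly that the three rows of the displayed matrix arise as factorizations of $\beta_1 + m_0$, $\beta_1 + m_1$, and $\beta_1 + n$ in $H$. Since $p = 1$, the structure collapses dramatically: for every $t \geq 1$ we have $r_t = 1$ and $q_t = t-1$, so $g_t = tm_1$. In particular $r = r_u = 1$, $u = q+1$ (hence $q = u-1$), and
\[
\beta_1 \;=\; g_{(q-1)+1} + (v-w-1)n - m_0 \;=\; (u-1)m_1 + (v-w-1)n - m_0.
\]
We also note that because $W = [u-z,u-1]\times[v-w,v-1] \neq \emptyset$, both $z \geq 1$ and $w \geq 1$, which will be needed for nonnegativity of entries.

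First, for the top row, I would immediately read off $\beta_1 + m_0 = (u-1)m_1 + (v-w-1)n$, which produces the coefficient vector $(-1,\,u-1,\,v-w-1)$ after moving $m_0$ to the left with coefficient $-1$. Next, for the middle row, I would compute $\beta_1 + m_1 = u m_1 + (v-w-1)n - m_0 = g_u + (v-w-1)n - m_0$, and then apply Lemma \ref{P-I}(i), which gives $g_u = \lambda m_0 + wn$. Substituting yields $(\lambda-1)m_0 + (v-1)n$, matching the row $(\lambda-1,\,-1,\,v-1)$.

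For the bottom row I would compute $\beta_1 + n = (u-1)m_1 + (v-w)n - m_0$ and invoke Lemma \ref{P-I}(iii), which under $p=1$ reads $(u-z)m_1 + (v-w)n = \nu m_0$. Solving for $(v-w)n$ and substituting gives
\[
\beta_1 + n \;=\; (u-1)m_1 + \nu m_0 - (u-z)m_1 - m_0 \;=\; (\nu-1)m_0 + (z-1)m_1,
\]
which is exactly the row $(\nu-1,\,z-1,\,-1)$.

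The only genuine (and minor) point to check is nonnegativity of the off-diagonal entries: $u-1 \geq 1$ since $u \geq p+1 = 2$; $v-w-1 \geq 0$ since $w \in [0,v-1]$; $\lambda - 1 \geq 0$ and $\nu - 1 = \lambda + \mu + \epsilon - 1 \geq 0$ since $\lambda \geq 1$; and crucially $z - 1 \geq 0$, which is the place where the hypothesis $W \neq \emptyset$ is used (and is the only subtlety that distinguishes this case from the parallel Corollary \ref{p=1} of the $W = \emptyset$ section). Since no obstacle beyond this bookkeeping arises, the corollary will follow by stringing the three factorizations together into the claimed $3 \times 3$ matrix.
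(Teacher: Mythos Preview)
Your proposal is correct and is essentially the same approach as the paper: the paper simply invokes Propositions~\ref{r=1,L>1,PF_2(H)} and~\ref{r=1,L=1,PF_2(H)} and specializes to $p=1$, while you redo the underlying factorization computations (of $\beta_1+m_0$, $\beta_1+m_1$, $\beta_1+n$) directly in the $p=1$ setting, which is exactly what those propositions encode. Your explicit check that $z-1\geq 0$ via $W\neq\emptyset$ is the one detail worth highlighting, and it is handled correctly.
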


\begin{proof}
	Use Propositions \ref{r=1,L>1,PF_2(H)} and \ref{r=1,L=1,PF_2(H)}.
\end{proof}

Let $H$ be a numerical semigroup generated by a sequence $m_0,\ldots,m_e$. Let $k$ be a field. The semigroup ring $k[H]$ of $H$ is a $k$-subalgebra of the polynomial ring $k[t].$ In other words, $k[H] = k[t^{m_0},t^{m_1},\ldots,t^{m_e}].$ Set $R = k[x_0,\ldots,x_{e}]$ and define a map $\pi : R \rightarrow k[H]$ given by $\pi(x_i) = t^{m_{i}}$ for all $i=0,\ldots,e.$ Set $\deg x_i = m_{i}$ for all $i=0,\ldots,e.$ Observe that $R$ is a graded ring and that $\pi$ is a degree preserving surjective $k$-algebra homomorphism. We say kernel of $\pi$ is defining ideal of $k[H],$ denoted by $I(H)$. 

For a vector $b = (b_1,\ldots,b_n) \in \mathbb{Z}^n$, we let $b^{+}$ denote the vector whose $i$-th entry is $b_i$ if $b_i > 0$, and is zero otherwise, and we let $b^{-} = b^{+} - b.$ Then $b = b^{+} - b^{-}$ with $b^{+}, b^{-} \in \mathbb{N}^n.$ For a pseudo-Frobenius element $f$, the rows of $\RF(f)$ produce binomials in $I(H).$

\begin{definition}\label{RF-relation}
{\rm For some pseudo-Frobenius element $f$ of $H$, let $\delta_1, \ldots, \delta_n$ denote 
the row vectors of $\RF(f).$ Set $\delta_{(ij)} = \delta_j - \delta_i$, for all 
$1 \leq i < j \leq n.$ Then $\phi_{ij} = {\bf x}^{\delta_{(ij)}^+} - {\bf x}^{\delta_{(ij)}^-} \in I(H)$ 
for all $i < j.$ We call $\phi_{ij}$ an $\RF(f)$-relation. We call a binomial relation $\phi \in I(H)$ 
an $\bf{\RF}$-relation if it is an $\RF(f)$-relation for some $f \in \PF (H)$.
}
\end{definition}

\begin{proposition}\label{I(H), RF-relation}
	If $p=1$ and $\mu > 0,$ then $I(H)$ is minimally generated by $\RF(\beta_1)$-relations.
\end{proposition}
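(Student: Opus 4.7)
The plan is to exhibit the three $\RF(\beta_1)$-relations as the unique Herzog minimal generators of $I(H)$. The hypothesis $\mu > 0$ will be used to force $H$ to be non-symmetric, hence non-complete-intersection in embedding dimension three, which by Herzog's theorem pins down the structure of $I(H)$.

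Since $p = 1$, $H = \langle m_0, m_1, n\rangle$ has embedding dimension three, with $R = k[x_0, x_1, x_2]$ and $\pi(x_2) = t^n$. By Corollary \ref{p=1,PF_2(H)},
\[
\RF(\beta_1) = \begin{bmatrix} -1 & u-1 & v-w-1 \\ \lambda - 1 & -1 & v - 1 \\ \nu - 1 & z - 1 & -1 \end{bmatrix}.
\]
Since $p = 1$ forces $r = r' = 1$ and thus $\epsilon = 0$ in Lemma \ref{P-I}(iii), we have $\nu = \lambda + \mu$. Computing the three pairwise row differences and reading off positive and negative parts gives
\[
\phi_{12} = x_0^{\lambda} x_2^{w} - x_1^{u}, \qquad \phi_{23} = x_0^{\mu} x_1^{z} - x_2^{v}, \qquad \phi_{13} = x_0^{\nu} - x_1^{u-z} x_2^{v-w}.
\]

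Next I would verify that $H$ is not a complete intersection. By Corollary \ref{p=1,PF_1(H)} together with the hypothesis $\mu > 0$, the element $\alpha_1$ belongs to $\PF_1(H)$, while Corollary \ref{p=1,PF_2(H)} gives $\beta_1 \in \PF_2(H)$. Comparing the corresponding $\RF$-matrix entries shows $\alpha_1 \neq \beta_1$, because $W \neq \emptyset$ forces $z \geq 1$ and $w \geq 1$ (so that the rows cannot coincide). Hence $H$ has type at least $2$, so $H$ is not symmetric; as symmetric is equivalent to complete intersection in embedding dimension three by Herzog, $I(H)$ is not a complete intersection and is therefore minimally generated by exactly three binomials.

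Finally I would invoke Herzog's uniqueness for the minimal generators of $I(H)$ in the non-CI embedding-dimension-three case: the three generators are determined up to sign by the condition that the pure-power exponent on each variable $x_i$ is the smallest positive integer $c_i$ with $c_i m_i$ in the monoid generated by the other two generators, and the exponents satisfy the consistency $c_i = $ (sum of the $x_i$-exponents in the other two generators). The numbers $u$ and $v$ are these Herzog numbers for $m_1$ and $n$ directly from their definitions in Lemma \ref{P-I}, and the identities $u = (u-z) + z$, $v = (v-w) + w$, and $\nu = \lambda + \mu$ exactly satisfy the consistency constraints, matching $\phi_{12}, \phi_{23}, \phi_{13}$ to Herzog's three generators. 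The main obstacle is the minimality of $\nu$: this is not apparent from Lemma \ref{P-I}(iii) but is forced by Herzog's uniqueness once the other two Herzog numbers and the consistency relations are confirmed.
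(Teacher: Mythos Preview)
Your overall strategy is sound and reaches the same conclusion, but by a different route than the paper. The paper simply invokes \cite[Theorem 4.5]{patil93}, which already supplies the explicit minimal generating set $\{x_0^{\nu}-x_1^{u-z}x_2^{v-w},\ x_1^{u}-x_0^{\lambda}x_2^{w},\ x_2^{v}-x_0^{\mu}x_1^{z}\}$ (after the translations $q+1=u$, $q'+1=u-z$, $q-q'=z$ valid for $p=1$), and then matches these three binomials against $\delta_{13},\delta_{12},\delta_{23}$. You instead rebuild this via Herzog's structure theorem in embedding dimension three, which is more self-contained but requires checking that $u,v,\nu$ really are the Herzog minimal exponents. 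Two points deserve tightening. First, the argument ``the rows cannot coincide'' does not prove $\alpha_1\neq\beta_1$: distinct $\RF$-matrices can represent the same pseudo-Frobenius element. The clean argument reads $\alpha_1-\beta_1=wn-zm_1$ from the first rows and observes that $wn\notin H'$ (since $1\le w<v$) while $zm_1\in H'$. Second, $u$ is \emph{not} ``directly'' the Herzog number for $m_1$: it is defined via the Ap\'ery set, not as $\min\{t>0: tm_1\in\langle m_0,n\rangle\}$. The identification holds but genuinely uses $\mu>0$: if $t<u$ and $tm_1=am_0+bn$, then $a=0$ (else $tm_1-m_0\in H$), so $bn=tm_1\in H'$ forces $b\ge v$, and then $tm_1=(b-v)n+\mu m_0+zm_1$ with $\mu>0$ gives $tm_1-m_0\in H$, a contradiction. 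Once $u$ and $v$ are confirmed as Herzog numbers, your consistency argument ($\nu=\lambda+\mu$, $u=(u-z)+z$, $v=w+(v-w)$ with all pieces positive) does pin down $\nu$ as the third. The paper's route is shorter only because \cite{patil93} has already packaged this analysis; your route trades that citation for Herzog's classical theorem.
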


\begin{proof}
	Since $p=1$ and $\mu > 0,$ using {\cite[Propositions 4.6 and 5.6]{patil-sengupta}} it follows that $\PF_1(H) = \{\alpha_1\}$ and $\PF_2(H) = \{\beta_1\}.$ Therefore, we get $\PF(H) = \{\alpha_1, \beta_1\}$ and $H$ is not symmetric. Also, as $p=1,$ we have $q_t = t-1$ for all $t.$ In \cite[Theorem 4.5]{patil93}, it is proved that in the above setup, 
	\[ I(H) = \langle x_0^{\nu} - x_1^{q'+1} x_2^{v-w} , x_1^{q+1} - x_0^{\lambda} x_2^w, x_2^v - x_0^{\mu} x_1^{q-q'} \rangle. \]
	Consider the matrix
	\[ \RF(\beta_1):=
	\begin{bmatrix}
	    -1 &  u-1 & v-w-1     \\
	    \lambda-1 &  -1 & v-1    \\
	    \nu-1 & z-1 & -1 
	\end{bmatrix}.
	\]
	If $\delta_1,\delta_2,\delta_3$ represent the row vectors of $\RF(\beta_1)$, then we have 
	the following $\RF(\beta_1)$-relations:
	$\delta_{12} = x_1^u - x_0^\lambda x_2^w, \delta_{13}=x_0^{\nu} - x_1^{u-z}x_2^{v-w}$ and $\delta_{23}= x_2^v - x_0^\mu x_1^z.$ Hence the result.
\end{proof}

\begin{proposition}\label{r geq 2, PF_2(H)}
	Let $r \geq 2$. Then for each  $\beta_j \in \PF_2(H)$, $j \in [p-r+2,p]$, there exists an $\RF(\beta_j)$ of the following type
	\begin{multline*}
		\RF(\beta_j)
		=
		\begin{bmatrix}
			-1 & 0 & 0 & \cdots& \cdots & \cdots & 0 & q & v-w-1\\
			0 & -1 & 0 & \cdots & \cdots & \cdots & 0 & q & v-w-1\\
			\vdots & \vdots & \vdots & \vdots & \vdots & \vdots  & \vdots  & \vdots  & \vdots \\
			0 & 0 & \cdots & -1 & 0 & \cdots & 0 & q & v-w-1\\
			\lambda -1 & 0 & 0 & \cdots & -1 & 0 & \cdots & 0 & v-1\\
			\vdots & \vdots & \vdots & \vdots & \vdots & \vdots  & \vdots  & \vdots  & \vdots \\
			\lambda-1 & 0 & 0 & \cdots & \cdots & \cdots & 0 & -1 & v-1\\
			\nu-\epsilon & 0 & \cdots & \cdots & \cdots &\cdots & 0 & q_{z+j-p-1} & -1\\
		\end{bmatrix}
		+
		\begin{bmatrix}
			R_{(0)} \\ R_{(1)} \\ \vdots \\  R_{(e)} \\ R_{(e+1)}\\ \vdots \\  R_{(p)} \\ R_{(n)}\\
		\end{bmatrix},
	\end{multline*}
	where $e \in [1,p-1]$ such that $r+j+e = 2p+1$ and $\epsilon =2$ or $1$ according as $r'+ r_{z-p+j-1} \leq p$ or  $r'+ r_{z-p+j-1} > p,$ and 
	\begin{enumerate}
		\item $R_{(0)} = (a^{(0)},\ldots,a^{(p-e)},\ldots,a^{(n)})$ such that $a^{(p-e)} = 1$ and all other $a^{(k)}$'s equal to zero,
		\item for all $k \in [1,e]$, $R_{(k)} = (a^{(0)},\ldots,a^{(p-e+k)},\ldots,a^{(n)})$ such that $a^{(p-e+k)} = 1$ and all other $a^{(k)}$'s equal to zero,
		\item for all $k \in [e+1,p]$, $R_{(k)} = (a^{(0)},\ldots,a^{(j+k-p-1)},\ldots,a^{(n)})$ such that $a^{(j+k-p-1)} = 1$ and all other $a^{(k)}$'s equal to zero,
		\item $R_{(n)} = (a^{(0)},\ldots,a^{(r_{z+j-p-1})},\ldots,a^{(n)})$ such that $a^{(r_{z+j-p-1})} = 1$ and all other $a^{(k)}$'s equal to zero.
	\end{enumerate}
\end{proposition}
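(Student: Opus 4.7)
Writing $t = (q-1)p + r + j - 1$ and using $r+j-1 \in [p+1, 2p]$ (which holds since $j \in [p-r+2, p]$ and $r \geq 2$), we get $q_t = q$ and $r_t = r + j - 1 - p = p - e$ from the defining relation $r+j+e = 2p+1$. The plan is to verify the four blocks of rows of the claimed matrix by computing explicit factorizations of $\beta_j + m_0$, $\beta_j + m_k$ for $k \in [1, p]$, and $\beta_j + n$, exactly in the spirit of Propositions \ref{r geq 2} and \ref{r=1,L>1,PF_2(H)}. The zeroth row is immediate: $\beta_j + m_0 = g_t + (v-w-1)n = q m_p + m_{p-e} + (v-w-1)n$.

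For $k \in [1, e]$ one has $r_t + r_k = (p - e) + k \leq p$, so Lemma \ref{P-I}(2) gives $g_t + g_k = m_0 + g_{t+k}$, whence $\beta_j + m_k = q m_p + m_{p-e+k} + (v-w-1)n$. For $k \in [e+1, p]$ instead $r_t + r_k > p$, so $g_t + g_k = g_{t+k}$; now $j+k-p-1 \in [p+1-r, p-1]$ and $r_u + r_{j+k-p-1} = r + (j+k-p-1) > p$ (again using $k \geq e+1$), so a second application of Lemma \ref{P-I}(2) produces $g_{t+k} = g_u + g_{j+k-p-1}$, and substituting $g_u = \lambda m_0 + wn$ yields $\beta_j + m_k = (\lambda - 1) m_0 + m_{j+k-p-1} + (v-1)n$. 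This matches the middle block of rows.

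For the last row, Lemma \ref{P-I}(iii) rewrites $(v-w)n$ as $\nu m_0 - g_{u-z}$, so $\beta_j + n = g_t + (v-w)n - m_0 = (\nu - 1)m_0 + (g_t - g_{u-z})$. Writing $t - (u-z) = z + j - p - 1$ and applying Lemma \ref{P-I}(2) to $g_{u-z} + g_{z+j-p-1}$ gives $g_t - g_{u-z} = g_{z+j-p-1} - \epsilon' m_0$, where $\epsilon' = 1$ if $r' + r_{z+j-p-1} \leq p$ and $\epsilon' = 0$ otherwise. With $\epsilon = 1 + \epsilon'$, this produces the claimed last row $\beta_j + n = (\nu - \epsilon)m_0 + q_{z+j-p-1} m_p + m_{r_{z+j-p-1}}$.

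The main obstacle I anticipate is justifying that $z + j - p - 1 \geq 0$, so that $g_{z+j-p-1}$ is a legitimate index; this is automatic when $z \geq p$ (since $j \geq p-r+2$), but when $z < p$ one should invoke the explicit constraint $j \geq p - z + 1$ coming from the description of $\PF_2(H)$ in \cite[Propositions 5.7--5.9]{patil-sengupta}, mirroring subcase (b) in the proof of Proposition \ref{r=1,L>1,PF_2(H)}. Apart from this case split, the entire argument is a direct adaptation of the previous propositions in this section.
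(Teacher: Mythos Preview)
Your proposal is correct and follows exactly the approach the paper intends: the paper's own proof is the single sentence ``Use similar arguments as in Propositions~\ref{r geq 2} and~\ref{r=1,L>1,PF_2(H)},'' and what you have written is precisely the explicit splice of those two arguments (rows $0$ through $p$ handled as in Proposition~\ref{r geq 2} with $(v-1)n$ replaced by $(v-w-1)n$, and the last row via the $(v-w)n = \nu m_0 - g_{u-z}$ substitution as in Proposition~\ref{r=1,L>1,PF_2(H)}). Your identification of the $z<p$ case-split and its resolution via the description of $\PF_2(H)$ in \cite{patil-sengupta} also mirrors subcase~(b) of Proposition~\ref{r=1,L>1,PF_2(H)} exactly.
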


\begin{proof}
	Use similar arguments as in Propositions \ref{r geq 2} and \ref{r=1,L>1,PF_2(H)}.
\end{proof}

\section{$\RF$-relations in the symmetric case}

In this section, we consider those numerical semigroups generated by an almost arithmetic sequence which are symmetric. In particular, let $H = \langle m_0,m_1,\ldots,m_p,n \rangle$, where $m_i = m_0 + id$ for $i \in [1,p]$ and $\gcd(m_0,n,d)=1$ such that $\text{type}(H) = |\PF(H)| = 1.$ Using the language from the previous sections, we list the different cases in which $\text{type}(H)=1$ and also give a possible $\RF$-matrix of the Frobenius number in each case. The aim of this section is to prove that $I(H)$ has a minimal generating set consisting of $\RF$-relations in the above setup if the embedding dimension is either 4 or 5.\\

Let $H$ be a numerical semigroups generated by an almost arithmetic sequence. Then $\text{type}(H)=1$ in the following cases: \\

\textbf{Case 1.} Let $W = \emptyset.$ In this case, if $r=1$, then $H$ is of type 1 only if $p=1$ and $\PF(H)=\{\gamma_1\}$. If $r \geq 2,$ then $H$ is of type 1 only if $r=2$ and $\PF(H)=\{\gamma_p\}.$ \\

\textbf{Case 2.} Let $W \neq \emptyset.$ If $p=1,$ then $H$ is of type 1 only if $\mu =0$ and $\PF(H)=\{\beta_1\}.$ \\
Suppose $p \geq 2.$ The only cases in which $H$ is of type 1 are the following:
\begin{enumerate}
	\item[(i)] $r=1,$ $r'=2,$ $\lambda=1,$ $z<p$, and $\PF(H) = \{\alpha_{p+1}\}.$ 
	\item[(ii)] $r=2,$ $r'=1,$ $\mu =0,$ $q'=0$, and $\PF(H)=\{\beta_p\}.$
	\item[(iii)] $r=2,$ $r'=2,$ $\mu=0$, and $\PF(H)=\{\beta_p\}.$ 	 
\end{enumerate}

\noindent
We now consider each case separately and list an $\RF$-matrix of the Frobenius number in each case.\\

\textbf{Case 1.} Let $W = \emptyset.$\\

Let $p=1,$ $r=1$ and $\PF(H)=\{\gamma_1\}$. Then we have choices for the $\RF(\gamma_1)$-matrix as given in Corollary \ref{p=1}.

Let $r=2$ and $p \geq 2.$ Then there exists an $\RF$-matrix of $\gamma_p$ as given in Corollary \ref{r=2,ne}. \\

\textbf{Case 2.} Let $W \neq \emptyset.$ \\

Suppose $p=1$ and $\mu =0.$ Then there exists an $\RF$-matrix of $\beta_1$ as given in Corollary \ref{p=1,PF_2(H)}.

Let $p \geq 2.$ We have the following three sub-cases: \\

\textbf{Subcase (i).}
Let $r=1,$ $r'=2$, $\lambda=1$ and $z<p$ and $\PF(H)=\{\alpha_{p+1}\}.$ As a consequence, $r_z = p-1,$ implying that $z = p-1$ and $q_z = 0$. Also, $\lambda=1$ and $r' > r$ implies $\mu \neq 0.$ Using Proposition \ref{r' geq 2}, there exists an $\RF$-matrix of $\alpha_{p+1}$ of the following type
\begin{align*}
\begin{bmatrix}
-1 & 1 & 0 & 0 & \cdots & 0 & 0 & q' & v-1 \\
0 & -1 & 1 & 0 & \cdots & 0 & 0 & q' & v-1 \\
\vdots & \vdots  & \vdots & \vdots & \vdots & \vdots & \vdots & \vdots & \vdots \\
\vdots & \vdots  & \vdots & \vdots & \vdots & \vdots & \vdots & \vdots & \vdots \\
0 & 0 & 0 & \cdots & \cdots & 0 & -1 & q'+1 & v-1 \\
\nu-1 & 0 & 0 & \cdots & \cdots & 0 & 1 & -1 & w-1 \\
\mu-1 & 1 & 0 & \cdots & \cdots & 0 & 1 & q' & -1
\end{bmatrix}.
\end{align*}

\textbf{Subcase (ii).}
Let $r=2,$ $r'=1$, $\mu =0$, $q'=0$ and $\PF(H) = \{\beta_p\}.$ Then $r_z = 1$ and hence $r_{z-1} = p$ and $q_{z-1} = q_z - 1.$ Since $q'=0,$ we get $u-z = r'=1$ which implies $z=u-1=qp+1$ and hence $q_z = q.$ Also $r' + r_{z-1} = p+1 > p$ implies $\epsilon = 1$ and as $\mu = 0$ and $r' < r$, we get $\nu = \lambda + 1.$ By Proposition \ref{r geq 2, PF_2(H)}, there exists an $\RF$-matrix of $\beta_p$ of the following type 
\begin{align*}
\begin{bmatrix}
-1 & 1 & 0 & 0 & \cdots & 0 & 0 & q & v-w-1 \\
0 & -1 & 1 & 0 & \cdots & 0 & 0 & q & v-w-1 \\
\vdots & \vdots  & \vdots & \vdots & \vdots & \vdots & \vdots & \vdots & \vdots \\
\vdots & \vdots  & \vdots & \vdots & \vdots & \vdots & \vdots & \vdots & \vdots \\
0 & 0 & 0 & \cdots & \cdots & 0 & -1 & q+1 & v-w-1 \\
\lambda-1 & 0 & 0 & \cdots & \cdots & 0 & 1 & -1 & v-1 \\
\lambda & 0 & 0 & \cdots & \cdots & 0 & 0 & q & -1
\end{bmatrix}.
\end{align*}

\textbf{Subcase (iii).}
Let $r=2$, $r'=2,$ $\mu=0$ and $\PF(H) = \{\beta_p\}.$ Then $r_z = p$ and hence $r_{z-1} = p-1$ and $q_{z-1} = q_z .$ Also, $r' + r_{z-1} = p+1 > p$ implies $\epsilon = 1$ and as $\mu = 0$ and $r=r'$, we get $\nu = \lambda.$ By Proposition \ref{r geq 2, PF_2(H)}, there exists an $\RF$-matrix of $\beta_p$ of the following type
\begin{align*}
\begin{bmatrix}
-1 & 1 & 0 & 0 & \cdots & 0 & 0 & q & v-w-1 \\
0 & -1 & 1 & 0 & \cdots & 0 & 0 & q & v-w-1 \\
\vdots & \vdots  & \vdots & \vdots & \vdots & \vdots & \vdots & \vdots & \vdots \\
\vdots & \vdots  & \vdots & \vdots & \vdots & \vdots & \vdots & \vdots & \vdots \\
0 & 0 & 0 & \cdots & \cdots & 0 & -1 & q+1 & v-w-1 \\
\lambda-1 & 0 & 0 & \cdots & \cdots & 0 & 1 & -1 & v-1 \\
\lambda-1 & 0 & 0 & \cdots & \cdots & 0 & 1 & q_z & -1
\end{bmatrix}.
\end{align*}
\medskip

For a pseudo-Frobenius element $f \in H,$ the difference of any two rows of an $\RF(f)$-matrix produces a binomial, which we call an $\RF(f)$-relation (see Definition \ref{RF-relation}). It is easy to check that these binomials are elements in $I(H).$ In \cite{herzogWatanabe}, the authors raised the following question: When is $I(H)$ minimally generated by $\RF$-relations? The authors answer the question in the affirmative when the embedding dimension is 3 and also when the embedding dimension is 4 and the semigroup is pseudo-symmetric or almost symmetric. They also remark that in the case of symmetric semigroups with embedding dimension 4, $\RF(H)$ cannot be determined uniquely and that all the generators of $I(H)$ cannot be obtained from a single $\RF(H).$

When $H$ is generated by an almost arithmetic sequence, using \cite[Theorem 1]{etoGeneric} and the structure of $\RF$-matrices (as given in section 3 and 4), it is easy to check that the $\RF$-matrices are not unique. We now answer the question of Herzog-Watanabe in the affirmative for the symmetric numerical semigroups generated by an almost arithmetic sequence, with embedding dimension 4 or 5.

\begin{proposition}\label{W = emptyset,p=2,RF-relation}
	Suppose $W = \emptyset$ and $p=2$. If $H$ is symmetric, then $I(H)$ is minimally generated by $\RF$-relations.
\end{proposition}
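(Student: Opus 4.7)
The plan is to reduce the statement to an explicit case analysis driven by the matrix produced in Corollary \ref{r=2,ne}. Under the hypotheses $W=\emptyset$, $p=2$ and $H$ symmetric, the Case 1 analysis above forces $r=2$ and $\PF(H)=\{\gamma_2\}$, so there is a unique pseudo-Frobenius number and I only need to study the four rows of a single $\RF$-matrix. I will write those rows in closed form, form the six pairwise row-differences, and exhibit four of the resulting binomials as a minimal generating set of $I(H)$.

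First I will record, directly from Corollary \ref{r=2,ne}, the four rows $\delta_1,\delta_2,\delta_3,\delta_4$ of $\RF(\gamma_2)$ in closed form, branching into the sub-cases $\mu>0$ and $\mu=0$, and within each into $r_z=1$ and $r_z=2$. For example, when $\mu>0$ and $r_z=1$ the rows are
\[
\delta_1=(-1,1,q,v-1),\quad \delta_2=(0,-1,q+1,v-1),\quad \delta_3=(\lambda-1,1,-1,w+v-1),\quad \delta_4=(\mu-1,2,q+q_z,-1),
\]
and the remaining three sub-cases are obtained by the substitutions already spelled out in Corollary \ref{r=2,ne}. Second, I will compute all six differences $\delta_{ij}=\delta_j-\delta_i$ for $1\le i<j\le 4$ and write down the corresponding $\RF$-relations $\phi_{ij}=\mathbf{x}^{\delta_{ij}^+}-\mathbf{x}^{\delta_{ij}^-}$. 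In every sub-case $\phi_{12}=x_0x_2-x_1^2$ (the arithmetic-progression binomial), while $\phi_{13},\phi_{14},\phi_{23},\phi_{24},\phi_{34}$ depend on $\lambda,\mu,w,v,q,q_z$; the sign patterns of their entries are controlled by Lemma \ref{P-B} (in particular $q'\le q$ and the subcase constraints on $\lambda,\mu$), so each $\phi_{ij}$ is an unambiguous, correctly signed binomial in $I(H)$.

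Third, I will match the $\phi_{ij}$'s against an explicit minimal generating set of $I(H)$. Because $H$ is symmetric of embedding dimension $4$ in the almost-arithmetic class, the minimal generators of $I(H)$ are known and can be extracted from the Ap\'ery-set description in \cite{patil-sengupta} (or derived directly from the gluing/syzygy structure forced by $r=2$ and by whether $z=0$ or $w=0$, the two alternatives that make $W=\emptyset$). In each of the resulting sub-cases I will identify four of the $\phi_{ij}$'s with the generators: $\phi_{12}$ always supplies the quadric $x_0x_2-x_1^2$, the relation comparing $\delta_2$ and $\delta_3$ supplies the binomial encoding $g_u=\lambda m_0+wn$ (i.e.\ $x_2^{q+1}-x_0^{\lambda}x_3^{w}$ up to adjustment), the relation comparing $\delta_1$ or $\delta_2$ with $\delta_4$ supplies the binomial encoding $vn=\mu m_0+g_z$, and $\phi_{34}$ (or $\phi_{13}$) supplies the remaining generator coming from the symmetric-type constraint $\nu=\lambda+\mu+\epsilon$. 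Linear independence of these four binomials in $I(H)/\mathfrak{m}I(H)$ will be checked by looking at their leading monomials, and the fact that they suffice will follow from matching them one-for-one with the known minimal generators.

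The main obstacle is bookkeeping across sub-cases rather than any single conceptual step: one must verify in each of the four combinations of $(\mu=0$ or $>0)$ and $(r_z=1$ or $2)$ that (a) the four chosen row-differences have the expected signs and therefore really are the binomials I claim, (b) the three ``non-quadric'' binomials are genuinely pairwise distinct and irredundant, and (c) together with $x_0x_2-x_1^2$ they generate $I(H)$. The constraints from Lemma \ref{P-B} together with the defining identity $\nu=\lambda+\mu+\epsilon$ are exactly what is needed to pin down the signs, so the verification in each sub-case reduces to a finite, mechanical check.
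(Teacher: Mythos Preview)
Your approach is essentially the same as the paper's: specialize Corollary \ref{r=2,ne} to $p=2$, write out the four rows of $\RF(\gamma_2)$ in the sub-cases determined by $\mu$ and $r_z$ (equivalently $r'$, since $r=2$, $p=2$ forces $r_z=1\Leftrightarrow r'=1$ and $r_z=2\Leftrightarrow r'=2$), form the row-differences, and match the resulting binomials against a known minimal generating set of $I(H)$.

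There are, however, two concrete inaccuracies you should fix. First, you anticipate \emph{four} minimal generators, but in every sub-case here $I(H)$ is a complete intersection with exactly \emph{three} minimal generators: the quadric $x_0x_2-x_1^2$, the binomial $x_0^{\lambda}x_3^{w}-x_2^{q+1}$ encoding $g_u=\lambda m_0+wn$, and a binomial of the shape $x_0^{\mu}x_{r_z}x_2^{q_z+\ast}-x_3^{v}$ encoding $vn=\mu m_0+g_z$ (with the obvious specialization when $\mu=0$). The ``fourth'' relation you describe, coming from the identity $\nu=\lambda+\mu+\epsilon$, is a consequence of the other three and is not part of a minimal generating set; so your step (b) about irredundancy would fail as stated. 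Second, the minimal generating set you need is the one given in \cite[Theorem 4.5]{patil93}, not extracted from \cite{patil-sengupta} (which treats derivation modules, not the defining ideal). Once you correct these two points, your case-by-case verification goes through exactly as in the paper: $\phi_{12}$ gives the quadric, $\phi_{13}$ gives the $g_u$-relation, and $\phi_{14}$ (when $\mu>0$) or $\phi_{34}$ (when $\mu=0$, where $w=0$) gives the $vn$-relation.
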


\begin{proof}
	Since $W = \emptyset$, $p=2$ and $H$ is symmetric, we get $r=2$ and $\PF(H) = \{\gamma_2\}$. Let $\delta_1, \delta_2, \delta_3$ and $\delta_4$ denote the rows of $\RF(\gamma_2)$ where
	\begin{align*}
		\RF(\gamma_2) = 
		\begin{bmatrix}
			-1 &  1 & q & v-1     \\
			0 & -1 &  q+1 & v-1    \\
			\lambda-1 & 1 & -1 & w+v-1  \\
			a_{41} & a_{42} & a_{43} & a_{44}
		\end{bmatrix},
	\end{align*}
	and $\delta_4 = (a_{41}, a_{42}, a_{43}, a_{44})$ is as follows:
	\begin{enumerate}
		\item If $r'=1$ and $\mu >0,$ then $\delta_4 = (\mu-1, 2, q+q_z,-1).$
		\item If $r'=2$ and $\mu >0,$ then $\delta_4 = (\mu-1, 1, q+q_z+1,-1).$
		\item If $r'=1$ and $\mu=0,$ then $\delta_4 = (\lambda-1, 2, q_z-1,-1).$
		\item If $r'=2$ and $\mu=0,$ then $\delta_4 = (\lambda-1, 1, q_z,-1).$
	\end{enumerate}
	In \cite[Theorem 4.5]{patil93}, the author gives a minimal generating set of the defining ideal $I(H).$ It is therefore sufficient to show that these binomials arise from $\RF(\gamma_2)$-relations.  \\
	
	(1) Suppose $\mu > 0$ and $r'=1$. Since $r=2$ and $r'= 1$, we get $r_z = 1$ and $q = q' + q_z.$ Then by \cite[Theorem 4.5]{patil93}, the set $\{x_0x_2 - x_1^2, x_0^{\lambda}x_3^w - x_2^{q+1}, x_0^{\mu}x_1x_2^{q_z}-x_3^v \}$ forms a minimal generating set for $I(H).$ These binomials arise from the $\RF(\gamma_2)$-relations corresponding to $\delta_{12} = (1,-2,1,0)$, $\delta_{13} = (\lambda,0,-(q+1),w)$, and $\delta_{14} = (\mu, 1 , q_z, -v)$ respectively. \\ 
	
	(2) Suppose $\mu > 0$ and $r'= 2$. Since $r=2$ and $r'= 2$, we get $r_z = 2$ and $q=q'+q_z+1.$ Then by {\cite[Theorem 4.5]{patil93}}, the set $\{x_0x_2 - x_1^2, x_0^{\lambda}x_3^w - x_2^{q+1}, x_0^{\mu}x_2^{q_z+1}-x_3^v\}$ forms a minimal generating set for $I(H).$ These binomials arise from the $\RF(\gamma_2)$-relations corresponding to $\delta_{12} = (1,-2,1,0)$, $\delta_{13} = (\lambda,0,-(q+1),w)$, and $\delta_{14} = (\mu, 0 , q_z + 1, -v)$ respectively. \\
	
	(3) Suppose $\mu = 0$ and $r'=1$. Since $\mu = 0$, we have $z > p$ and hence $w=0.$ Also, $r'=1$ implies $r_z = 1$ and $q=q'+q_z+1.$ Then by {\cite[Theorem 4.5]{patil93}}, the set $\{x_0x_2 - x_1^2, x_0^{\lambda} - x_2^{q+1}, x_1x_2^{q_{z}+1}-x_3^v \}$ forms a minimal generating set for $I(H).$ These binomials arise from the $\RF(\gamma_2)$-relations corresponding to $\delta_{12} = (1,-2,1,0)$, $\delta_{13} = (\lambda,0,-(q+1),0)$, and $\delta_{34} = (0, 1 , q_z + 1, -v)$ respectively.\\
	
	(4) Suppose $\mu = 0$ and $r' = 2$. Since $r=2$ and $r'=2$, we get $r_z = 2$ and $q=q'+q_z+1.$ Then by \cite[Theorem 4.5]{patil93}, the set $\{x_0x_2 - x_1^2, x_0^{\lambda} - x_2^{q+1}, x_2^{q_{z}+1}-x_3^v \}$ forms a minimal generating set for $I(H).$ These binomials arise from the $\RF(\gamma_2)$-relations corresponding to $\delta_{12} = (1,-2,1,0)$, $\delta_{13} = (\lambda,0,-(q+1),0)$, $\delta_{34} = (0, 0 , q_z + 1, -v)$ respectively. 
\end{proof}

\begin{proposition}\label{W neq emptyset, p=2, RF-relations}
	Suppose $W \neq \emptyset$ and $p=2$. If $H$ is symmetric, then $I(H)$ is minimally generated by  $\RF$-relations.
\end{proposition}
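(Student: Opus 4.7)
The plan is to imitate the proof of Proposition \ref{W = emptyset,p=2,RF-relation} case by case, using the classification already set up in Section 5. When $W \neq \emptyset$, $p = 2$, and $H$ is symmetric, the semigroup falls into exactly one of the three subcases listed above, namely: (i) $r=1$, $r'=2$, $\lambda=1$, $z<p$, with $\PF(H) = \{\alpha_3\}$; (ii) $r=2$, $r'=1$, $\mu=0$, $q'=0$, with $\PF(H) = \{\beta_2\}$; and (iii) $r=2$, $r'=2$, $\mu=0$, with $\PF(H) = \{\beta_2\}$. In each subcase an explicit $4\times 4$ matrix $\RF(f)$ has already been written out just before this proposition, so the pool of available $\RF$-relations is the six binomials ${\bf x}^{\delta_{ij}^+} - {\bf x}^{\delta_{ij}^-}$ for $1 \le i < j \le 4$, where $\delta_1,\delta_2,\delta_3,\delta_4$ are the rows of that matrix.

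On the side of generators I will invoke \cite[Theorem 4.5]{patil93}, which provides an explicit minimal binomial presentation of $I(H)$ for any numerical semigroup generated by an almost arithmetic sequence, with exponents written in the same parameters $q$, $q'$, $v$, $w$, $\lambda$, $\mu$, $\nu$, $z$, $q_z$, $r_z$ used throughout the paper. Specialising that presentation to each of the three subcases produces a short list of binomial generators, and for each such generator I will display one index pair $(i,j)$ for which the row-difference $\delta_j - \delta_i$, split into positive and negative parts, equals that generator. Paralleling the four-case analysis already carried out for $W = \emptyset$, this reduces the statement to a finite matching check in each of the three subcases.

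The main obstacle is bookkeeping rather than a new idea. Specialising to $p = 2$ one has $r_t \in \{1,2\}$ and $q_t = \lfloor (t-1)/2 \rfloor$, and the hypotheses of each subcase further pin down $r_z$, $q_z$, and the arithmetic relation among $\lambda, \mu, \nu$; for instance, in Subcase (i), $z < p = 2$ forces $z = 1$, hence $r_z = 1$ and $q_z = 0$, while in Subcases (ii) and (iii) the assumption $\mu = 0$ forces $z > p$, $w = 0$, and $\nu = \lambda + \epsilon$, together with the explicit values of $r_{z-1}$ and $q_{z-1}$ recorded in Section 5. Once these reductions are in place, the comparison of each Patil generator with a single row-difference of the tabulated $\RF$-matrix becomes immediate, in the same spirit as the four sub-cases handled inside Proposition \ref{W = emptyset,p=2,RF-relation}.

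The subtle point I foresee is that the canonical $\RF(f)$ displayed in Section 5 is not the unique admissible matrix. If in some subcase a particular Patil generator fails to appear as $\delta_{ij}$ for the displayed rows, I will exploit the non-uniqueness of $\RF(f)$ (as noted after the definition of $\RF$-matrices) and replace the offending row by an alternative factorisation of $f + m_i$, which is permitted since any such matrix is still an $\RF$-matrix of the same pseudo-Frobenius number. This flexibility, together with the tight structure imposed by the symmetric hypothesis, should suffice to realise every minimal generator of $I(H)$ as an $\RF$-relation, completing the proof.
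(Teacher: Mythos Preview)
Your plan is exactly the paper's proof: split into the three subcases, pull the minimal generators from \cite[Theorem 4.5]{patil93}, and match each to a row difference $\delta_{ij}$, passing to a second $\RF$-matrix when the displayed one does not suffice; in the paper this second matrix is in fact needed in Subcases~(i) and~(ii), where $I(H)$ has five generators, so your ``subtle point'' is not merely a contingency but an essential step.

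One bookkeeping slip to fix before you execute the plan: you assert that in Subcases~(ii) and~(iii) the hypothesis $\mu=0$ forces $w=0$. That implication was used in Section~3 only under the standing assumption $W=\emptyset$ (since $W=\emptyset$ means $z=0$ or $w=0$, and $\mu=0$ gives $z>p$, hence $w=0$). Here $W\neq\emptyset$ forces both $z\neq 0$ and $w\neq 0$, so $w>0$ throughout; in particular the Patil generators in Subcase~(ii) carry $x_3^{w}$ and $x_3^{v-w}$ with $w>0$, and your row matching must respect this.
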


\begin{proof}
	We have the following sub-cases.\\
	
	\textbf{Subcase 1.} Suppose $r=1$, $r'=2$, $\lambda=1$, $z < 2$ and $\PF(H)= \{\alpha_3\}$.\\
	Since $W \neq \emptyset$ and $z < 2(=p)$, we get $\mu \neq 0$ and $z =1$. Hence we have an $\RF$-matrix for $\alpha_3$,
	\begin{align*}
		\RF(\alpha_3)=
		\begin{bmatrix}
			-1 &  1 & q' & v-1     \\
			0 & -1 &  q'+1 & v-1    \\
			\nu-1 & 1 & -1 & w-1  \\
			\mu-1 & 2 & q' & -1
		\end{bmatrix}.
	\end{align*}
	Since $(\mu-1)m_0+2m_1+q'm_2 = (\nu-2)m_0 + 2m_1 +(q-1)m_2 = (\nu-1)m_0 + qm_2$, we get another $\RF$-matrix for $\alpha_3$,
	\begin{align*}
		\RF(\alpha_3)'=
		\begin{bmatrix}
			-1 &  1 & q' & v-1     \\
			0 & -1 &  q'+1 & v-1    \\
			\nu-1 & 1 & -1 & w-1  \\
			\nu-1 & 0 & q & -1
		\end{bmatrix}.
	\end{align*}
	Let $\delta_1,\delta_2,\delta_3,\delta_4$ and $\delta_1',\delta_2',\delta_3',\delta_4'$ denote the rows of $\RF(\alpha_3)$ and $\RF(\alpha_3)'$ respectively. By \cite[Theorem 4.5]{patil93}, the set 
	\[ \{ x_0x_2 - x_1^2, \ x_0^{\nu} - x_2^{q'+1}x_3^{v-w}, \ x_0^{\mu}x_1 - x_3^v , \ x_1x_2^{q'+1}-x_0x_3^w, \ x_2^{q+1}-x_1x_3^w \}\] 
	forms a minimal generating set for $I(H).$ These binomials arise from the $\RF(\alpha_3)$-relations corresponding to $\delta_{12} = (1,-2,1,0)$, $\delta_{13} =  (\nu,0,-(q'+1),-(v-w))$, $\delta_{14} = (\mu,1,0,-v)$, $\delta_{34} = (-1, 1 , q'+1, -w)$ and $\RF(\alpha_3)'$-relation corresponding to $\delta_{34}'= (0, -1, q+1, -w)$ respectively. \\
	
	\textbf{Subcase 2.} Suppose $r' = 1$, $r = 2$, $\mu = 0$, $q'= 0$ and $\PF(H) = \{\beta_2\}.$\\
	We have an $\RF$-matrix for $\beta_2$,
	\begin{align*}
		\RF(\beta_2)=
		\begin{bmatrix}
			-1 &  1 & q & v-w-1     \\
			0 & -1 &  q+1 & v-w-1    \\
			\lambda-1 & 1 & -1 & v-1  \\
			\lambda & 0 & q & -1
		\end{bmatrix}.
	\end{align*}
	Since $(q+1)m_2 + (v-w-1)n = \lambda m_0 + (v-1)n$, we get another $\RF$-matrix for $\beta_2$,
	\begin{align*}
		\RF(\beta_2)' =
		\begin{bmatrix}
			-1 &  1 & q & v-w-1     \\
			\lambda & -1 &  0 & v-1    \\
			\lambda-1 & 1 & -1 & v-1  \\
			\lambda & 0 & q & -1
		\end{bmatrix}.
	\end{align*}
	Let $\delta_1,\delta_2,\delta_3,\delta_4$ and $\delta_1',\delta_2',\delta_3',\delta_4'$ denote the rows of $\RF(\beta_2)$ and $\RF(\beta_2)'$ respectively. By \cite[Theorem 4.5]{patil93}, the set 
	\[ \{ x_0x_2 - x_1^2, \ x_0^{\lambda}x_3^w - x_2^{q+1}, \ x_0^{\lambda+1} -x_1x_3^{v-w}, \ x_0^{\lambda}x_1-x_2x_3^{v-w}, \ x_1x_2^q-x_3^v \} \] 
	forms a minimal generating set for $I(H).$ These binomials arise from the $\RF(\beta_2)$-relations corresponding to $\delta_{12} = (1,-2,1,0)$, $\delta_{13} =  (\lambda,0,-(q+1),w)$, $\delta_{14} = (\lambda+1,-1,0,-(v-w))$, $\delta_{24} = (\lambda, 1 ,-1, -(v-w))$ and $\RF(\beta_2)'$-relation corresponding to $\delta_{24}' = (0, 1, q, -v)$ respectively. \\
	
	\textbf{Subcase 3.} Suppose $r = 2$, $r' = 2$, $\mu = 0$ and $\PF(H) = \{\beta_2\}.$ \\
	We have an $\RF$-matrix for $\beta_2,$
	\[
	\RF(\beta_2)=
	\begin{bmatrix}
		-1 &  1 & q & v-w-1     \\
		0 & -1 &  q+1 & v-w-1    \\
		\lambda-1 & 1 & -1 & v-1  \\
		\lambda-1 & 1 & q_z & -1
	\end{bmatrix}.
	\]
	Let $\delta_1,\delta_2,\delta_3$ and $\delta_4$ denote the rows of $\RF(\beta_2)$. By \cite[Theorem 4.5]{patil93}, the set 
	\[ \{ x_0x_2 - x_1^2, \ x_0^{\lambda} - x_2^{q-q_z}x_3^{v-w}, \ x_2^{q_z+1}-x_3^v\} \]
	forms a minimal generating set for $I(H).$ These binomials arise from the $\RF(\beta_2)$-relations corresponding to $\delta_{12} = (1,-2,1,0)$, $\delta_{14} = (\lambda,0,-(q-q_z),-(v-w))$, and $\delta_{34} = (0, 0 , q_z + 1, -v)$ respectively. 
\end{proof}

\begin{theorem}   \label{p=2,t=1}
	Let $H = \langle m_0, m_1,m_2,n \rangle$ be a symmetric numerical semigroup generated by an almost arithmetic sequence. Then $I(H)$ has a minimal generating set consisting of $\RF$-relations.
\end{theorem}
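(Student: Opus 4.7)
The plan is to reduce the theorem directly to the two propositions that have just been proved, via a simple case split on whether the set $W$ is empty. Since the embedding dimension of $H$ is four and the minimal generating set is an almost arithmetic sequence $m_0, m_1, m_2, n$, we are in the regime $p = 2$ throughout. The symmetric configurations under this hypothesis have already been enumerated at the start of this section: if $W = \emptyset$ then necessarily $r = 2$ and $\PF(H) = \{\gamma_2\}$, while if $W \neq \emptyset$ then $H$ falls into exactly one of the three subcases (i), (ii), (iii) listed, with $\PF(H)$ equal to either $\{\alpha_3\}$ or $\{\beta_2\}$.

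Given this dichotomy, the proof is to apply Proposition \ref{W = emptyset,p=2,RF-relation} in the first case and Proposition \ref{W neq emptyset, p=2, RF-relations} in the second. Between them, these two propositions exhaust every symmetric almost arithmetic numerical semigroup of embedding dimension four, and each one directly asserts that $I(H)$ is minimally generated by $\RF$-relations. Nothing further is needed at the level of the theorem itself.

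The genuinely substantive work, already carried out in the two propositions, is to match each generator in Patil's explicit minimal generating set of $I(H)$ from \cite[Theorem 4.5]{patil93} with a row-difference $\delta_{ij}$ of some $\RF$-matrix of the unique pseudo-Frobenius element. The main obstacle, which I would flag as the conceptual content of the argument, is that a single $\RF$-matrix does not in general realize every minimal generator: in Subcases 1 and 2 of Proposition \ref{W neq emptyset, p=2, RF-relations} one had to exhibit a second $\RF$-matrix, obtained from an alternative factorization of $f + m_i$ in $H$, in order to capture the remaining generator. Once both $\RF$-matrices are on the table, the identification of each binomial in Patil's list with a particular $\delta_{ij}$ is a direct calculation that uses only the relations among $\lambda, \mu, \nu, q, q', q_z, w, z, r, r'$ established in Lemma \ref{P-I} and Lemma \ref{P-B}.
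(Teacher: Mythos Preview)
Your proposal is correct and follows exactly the paper's own approach: the proof of Theorem \ref{p=2,t=1} is simply to combine Propositions \ref{W = emptyset,p=2,RF-relation} and \ref{W neq emptyset, p=2, RF-relations} via the case split on $W$. Your additional commentary on the need for a second $\RF$-matrix in Subcases 1 and 2 accurately reflects the content of those propositions.
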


\begin{proof}
	Combine Propositions \ref{W = emptyset,p=2,RF-relation} and \ref{W neq emptyset, p=2, RF-relations}.
\end{proof}

\begin{remark}
{\rm	If $H$ is a numerical semigroup with embedding dimension 4, then $H$ is a complete intersection if $\mu(I(H))=3.$ In \cite{bresinsky3}, Bresinsky proves that $\mu(I(H))$ for a symmetric numerical semigroup $H$ with embedding dimension four is either three or five. From the proofs of Propositions \ref{W = emptyset,p=2,RF-relation} and \ref{W neq emptyset, p=2, RF-relations}, it is easy to check that a symmetric numerical semigroup $H$ generated by an almost arithmetic sequence is a complete intersection if either \\
	(1) $W = \emptyset,$ $r=2$ and $\PF(H)=\{\gamma_2\},$ or  \\
	(2) $W \neq \emptyset,$ $r=r'=2,$ $\mu=0$ and $\PF(H)=\{\beta_2\}.$ \\
	Also, for a numerical semigroup generated by an almost arithmetic sequence with embedding dimension three or four, a complete description of being complete intersection in terms of above notation is given in \cite[Proposition 3.6]{maloo-sengupta}.
	}
\end{remark}

\begin{theorem}   \label{p=3,t=1}
	Let $H = \langle m_0, m_1,m_2,m_3,n \rangle$ be a symmetric numerical semigroup generated by an almost arithmetic sequence. Then $I(H)$ has a minimal generating set consisting of $\RF$-relations.
\end{theorem}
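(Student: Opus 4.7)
The plan is to mimic the architecture of Theorem \ref{p=2,t=1}: split into the cases $W=\emptyset$ and $W\neq\emptyset$, write down a minimal generating set of $I(H)$ using the description from \cite[Theorem 4.5]{patil93} (whose statement covers the almost arithmetic case for every embedding dimension), and then exhibit each binomial as an $\RF$-relation coming from one of the explicit $\RF$-matrices produced in Sections 3 and 4, applied with $p=3$.

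First I would dispose of the case $W=\emptyset$. Here the only symmetric subcase with $p=3$ has $r=2$, $\PF(H)=\{\gamma_3\}$, and Corollary \ref{r=2,ne} gives an $\RF(\gamma_3)$ of size $5\times 5$ whose last row depends on whether $\mu>0$ or $\mu=0$ and on whether $r'=1$ or $r'=2$. In each of the four subcases I would write the explicit minimal generators of $I(H)$ from \cite[Theorem 4.5]{patil93} (the analogues of the binomials $x_0x_2-x_1^2$, $x_0^\lambda x_3^w-x_2^{q+1}$, etc.\ that appeared in Proposition \ref{W = emptyset,p=2,RF-relation}, with the extra quadric $x_1x_3-x_2^2$ coming from the arithmetic subsequence) and match them to specific row differences $\delta_{ij}$ of the matrix; this is a line-by-line check that the $(p+2)$-many rows of the $\RF$-matrix indeed produce the $(p+2)$-many generators that Bresinsky-type counts predict.

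Next I would handle the three sub-cases of $W\neq\emptyset$ in which $\text{type}(H)=1$, namely Subcases (i)--(iii) already listed in Section 5. In each sub-case I would start with the explicit $\RF$-matrix of size $5\times 5$ written down there and compare row differences against the minimal generators of $I(H)$ from \cite[Theorem 4.5]{patil93}. As happened in Propositions \ref{W = emptyset,p=2,RF-relation} and \ref{W neq emptyset, p=2, RF-relations}, some generators will not arise as a row difference of the chosen $\RF$-matrix; for these I would, as in Subcases 1 and 2 of Proposition \ref{W neq emptyset, p=2, RF-relations}, rewrite a row of the matrix using an alternative factorization of $f+m_i$ (for instance, trading $m_0+m_2$ for $2m_1$, or $(q+1)m_p+(v-w-1)n$ for $\lambda m_0+(v-1)n$) to produce a second admissible $\RF$-matrix whose row differences yield the missing relation. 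Finally, Theorem \ref{p=3,t=1} is assembled by combining the two cases.

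The main obstacle is combinatorial: for embedding dimension $5$ the $\RF$-matrix is $5\times 5$, so there are $\binom{5}{2}=10$ candidate $\RF$-relations for each matrix, and I expect several of the generators of $I(H)$ to require switching to a different $\RF$-matrix obtained by exploiting non-unique factorizations in $H$. Keeping track of which alternative factorization is needed in each of the seven symmetric sub-cases (four from $W=\emptyset$ and three from $W\neq\emptyset$), and verifying that in every sub-case the collection of row differences taken across all chosen $\RF$-matrices hits precisely a minimal generating set of $I(H)$ --- in particular checking that the extra quadratic relation $x_1x_3-x_2^2$ and the binomials of higher degree introduced by the embedding dimension five case are all realized --- is where the routine calculation lives. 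The conceptual content beyond Theorem \ref{p=2,t=1} is only the observation that, after allowing $\RF$-matrices to vary, Sections 3 and 4 furnish enough factorizations of $f+m_i$ and $f+n$ to cover every generator produced by \cite[Theorem 4.5]{patil93}.
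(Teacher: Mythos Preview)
Your plan matches the paper's proof almost exactly: split into $W=\emptyset$ and $W\neq\emptyset$, invoke \cite[Theorem 4.5]{patil93} for the minimal generators, and realize each generator as a row difference of one of possibly several $\RF$-matrices for the unique pseudo-Frobenius element, manufacturing alternate matrices via non-unique factorizations when needed. The paper does precisely this, and even uses the specific swap $m_1+qm_3=2m_2+(q-1)m_3$ (your $m_0+m_2\leftrightarrow 2m_1$ idea in disguise) to build the auxiliary matrices.

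Two concrete slips to fix before executing the plan. First, in the $W=\emptyset$ case with $p=3$ and $r=2$, the parameter $r'$ ranges over $[1,p]=\{1,2,3\}$, not just $\{1,2\}$; the paper treats six subcases ($\mu=0$ or $\mu>0$ crossed with $r'\in\{1,2,3\}$), and the form of the generator $\theta$ from \cite{patil93} genuinely changes with $r'$. Second, your remark that the ``$(p+2)$-many rows \ldots\ produce the $(p+2)$-many generators that Bresinsky-type counts predict'' is off on both points: Bresinsky's $3$-or-$5$ dichotomy is for embedding dimension four, and here the minimal number of generators is $6$ in some subcases (e.g.\ $W=\emptyset$, or $W\neq\emptyset$ with $r=r'=2$) and $9$ in others (the two $W\neq\emptyset$ subcases with $r\neq r'$), so a single $5\times 5$ $\RF$-matrix cannot suffice in the latter and the paper uses up to three distinct $\RF$-matrices there.
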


\begin{proof}
	We use the minimal generating set of $I(H)$ as given in \cite[Theorem 4.5]{patil93}. We first note that the subset of generators 
	\[ \{ \xi_{11}= x_0x_2 - x_1^2, \ \xi_{12}= x_0x_3 - x_1x_2, \ \xi_{22}= x_1x_3 - x_2^2 \} \] 
	appears in every case below and that these binomials arise from the $\RF$-relations corresponding to $\delta_{12},$ $\delta_{13}$ and $\delta_{23}$ respectively, where $\delta_1$, $\delta_2$ and $\delta_3$ are rows of the respective $\RF$-matrices in each case. \\
	 
	{\bf Case 1.} Suppose $W = \emptyset.$ Then $p=3,$ $r=2$ and $\PF(H) = \{\gamma_3 \}.$ \\
	By \cite[Theorem 4.5]{patil93}, it follows that $I(H)$ is minimally generated by
	\begin{align*}
		\{ \xi_{11}, \ \xi_{12}, \ \xi_{22}, \ \phi_0 = x_0^\lambda x_4^w - x_2x_3^q, \ \phi_1 = x_0^{\lambda-1}x_1x_4^w - x_3^{q+1}, \ \theta \},
	\end{align*}
	where $\theta = x_0^{\mu}x_1x_3^{q-q'} - x_4^v$ when $r'=1$ and $\theta = x_0^{\mu}x_{5-r'}x_3^{q-q'-1} - x_4^v$ when $r'=2,3.$ Also, by Corollary \ref{r=2,ne}, the matrix
	\[ \RF(\gamma_3) =
	\left[
	\begin{array}{c c c c c}
		-1 & 1 & 0 & q & v-1 \\
		0 & -1 & 1 & q & v-1 \\
		0 & 0 & -1 & q+1 &v-1 \\
		\lambda -1 & 0 & 1 & -1 & w+v-1 \\
		a_{5,1} & a_{5,2} & a_{5,3} & a_{5,4} & -1
	\end{array}
	\right],
	\]
	where the last row depends on the value of $\mu.$ We observe that the binomial $\phi_1$ arise from the $\RF$-relations corresponding to $\delta_{24}.$ Since $m_1 + qm_3 = 2m_2 + (q-1)m_3$, we get another $\RF$-matrix for $\gamma_3$,
	\[ \RF(\gamma_3)' =
	\left[
	\begin{array}{c c c c c}
		-1 & 0 & 2 & q-1 & v-1 \\
		0 & -1 & 1 & q & v-1 \\
		0 & 0 & -1 & q+1 &v-1 \\
		\lambda -1 & 0 & 1 & -1 & w+v-1 \\
		a_{5,1} & a_{5,2} & a_{5,3} & a_{5,4} & -1
	\end{array}
	\right].
	\]
	The binomial $\phi_0$ arises from the $\RF(\gamma_3)'$-relation corresponding to $\delta_{14}.$ In order to prove that $\theta$ arises from an $\RF$-relation, we consider the following cases:
	\begin{enumerate}
		\item Let $\mu=0$ and $r'=1.$ Then $\delta_5 = (\lambda-1, 1, 1, q_z - 1, -1)$, $w=0$ and $\theta$ arises from the $\RF$-relation corresponding to $\delta_{45}.$
		
		\item Let $\mu=0$ and $r'=2.$ Then $\delta_5 = (\lambda-1, 0, 1, q_z, -1)$, $w=0$ and $\theta$ arises from the $\RF$-relation corresponding to $\delta_{45}.$
		
		\item Let $\mu=0$ and $r'=3.$ Then $\delta_5 = (\lambda-1, 0, 2, q_z - 1, -1)$, $w=0$ and $\theta$ arises from the $\RF$-relation corresponding to $\delta_{45}.$
		
		\item Let $\mu>0$ and $r'=1.$ Then $\delta_5 = (\mu-1, 2, 0, q+q_z, -1)$ and $\theta$ arises from the $\RF$-relation corresponding to $\delta_{15}.$
		
		\item Let $\mu>0$ and $r'=2.$ Then $\delta_5 = (\mu-1, 1, 0, q+q_z+1, -1)$ and $\theta$ arises from the $\RF$-relation corresponding to $\delta_{15}.$
		
		\item Let $\mu>0$ and $r'=3.$ Then $\delta_5 = (\mu-1, 1, 1, q+q_z, -1)$ and $\theta$ arises from the $\RF$-relation corresponding to $\delta_{15}.$     
	\end{enumerate}
\medskip

	{\bf Case 2.} Suppose $W \neq \emptyset.$ We have the following sub-cases.\\
	
	\textbf{Subcase 1.} Suppose $r=1$, $r'=2$, $\lambda=1$, $z < 3$ and $\PF(H)= \{\alpha_4\}$.\\
	We consider the following $\RF$-matrices for $\alpha_4$,
	\begin{align*}
		\RF(\alpha_4)=
		\begin{bmatrix}
			-1 &  1 & 0 & q' & v-1     \\
			0 & -1 & 1 &  q' & v-1    \\
			0 & 0 & -1 & q'+1 & v-1   \\
			\mu & 0 & 1 & -1 & w-1  \\
			\mu-1 & 1 & 1 & q' & -1
		\end{bmatrix}
		, \
		\RF(\alpha_4)' =
		\begin{bmatrix}
			-1 &  1 & 0 & q' & v-1     \\
			0 & -1 & 1 &  q' & v-1    \\
			0 & 0 & -1 & q'+1 & v-1   \\
			\mu-1 & 2 & 0 & -1 & w-1  \\
			\mu-1 & 1 & 1 & q' & -1
		\end{bmatrix},
	\end{align*}
	and
	\begin{align*}
		\RF(\alpha_4)''=
		\begin{bmatrix}
			-1 & 0 & 2 & q'-1 & v-1     \\
			0 & -1 & 1 &  q' & v-1    \\
			0 & 0 & -1 & q'+1 & v-1   \\
			\mu & 0 & 1 & -1 & w-1  \\
			\mu & 0 & 0 & q'+1 & -1
		\end{bmatrix}.
	\end{align*}
	For $i=1,\ldots,5,$ let $\delta_i$, $\delta'_i$ and $\delta''_i$ denote the $i$-th rows of $\RF(\alpha_4)$, $\RF(\alpha_4)'$ and $\RF(\alpha_4)''$ respectively. By \cite[Theorem 4.5]{patil93}, the set 
	\begin{align*}
		\{ \xi_{11}, \ \xi_{12}, \ \xi_{22} \} \bigcup
		\bigg\{ 
		\begin{array}{lll}
			\theta = x_0^{\mu}x_2 - x_4^v, & \psi_0 = x_0^{\mu+1} - x_2x_3^{q'}x_4^{v-w}, & \psi_1 = x_0^{\mu}x_1 - x_3^{q'+1}x_4^{v-w}, \\ 
			\phi_0 = x_0x_4^w - x_1x_3^q, & \phi_1 = x_1x_4^w - x_2x_3^q, & \phi_2 = x_2x_4^w - x_3^{q+1} 
		\end{array}
		\bigg\} 
	\end{align*}
	forms a minimal generating set for $I(H).$ The binomials $\theta$, $\phi_0$ and $\psi_1$ arise from the $\RF(\alpha_4)$-relations corresponding to $\delta_{15}$, $\delta_{45},$ and $\delta_{24}$ respectively. The binomials $\phi_1$, $\phi_2$ and $\psi_0$ arise from the $\RF(\alpha_4)'$-relations corresponding to $\delta'_{45}$ and $\RF(\alpha_3)''$-relations corresponding to $\delta_{45}''$ and $\delta''_{14}$ respectively. \\
	
	\textbf{Subcase 2.} Suppose $r' = 1$, $r = 2$, $\mu = 0$, $q'= 0$ and $\PF(H) = \{\beta_3\}.$\\
	We consider the following $\RF$-matrices for $\beta_3$,
	\begin{align*}
		\RF(\beta_3)=
		\begin{bmatrix}
			-1 & 1 & 0 & q & v-w-1    \\
			0 & -1 & 1 & q & v-w-1    \\
			0 & 0 & -1 & q+1 & v-w-1   \\
			\lambda-1 & 0 & 1 & -1 & v-1  \\
			\lambda & 0 & 0 & q & -1
		\end{bmatrix},
		\RF(\beta_3)' =
		\begin{bmatrix}
			-1 & 0 & 2 & q-1 & v-w-1    \\
			0 & -1 & 1 & q & v-w-1    \\
			0 & 0 & -1 & q+1 & v-w-1   \\
			\lambda-1 & 0 & 1 & -1 & v-1  \\
			\lambda-1 & 1 & 1 & q-1 & -1
		\end{bmatrix}.
	\end{align*}
	Let $\delta_1,\delta_2,\delta_3,\delta_4$, $\delta_5$ and $\delta_1',\delta_2',\delta_3',\delta_4'$, $\delta'_5$ denote the rows of $\RF(\beta_3)$ and $\RF(\beta_3)'$ respectively. By \cite[Theorem 4.5]{patil93}, the set 
	\begin{align*}
	\{ \xi_{11}, \ \xi_{12}, \ \xi_{22} \} \bigcup
	\bigg\{ 
	\begin{array}{lll}
		\theta = x_1x_3^{q} - x_4^v, & \phi_0 = x_0^{\lambda}x_4^w - x_2x_3^q, & \phi_1 = x_0^{\lambda-1}x_1x_4^w - x_3^{q+1}, \\ 
		\psi_0 = x_0^{\lambda+1} - x_1x_4^{v-w}, & \psi_1 = x_0^{\lambda}x_1 - x_2x_4^{v-w}, & \psi_2 = x_0^{\lambda}x_2 - x_3x_4^{v-w} 
	\end{array}
	\bigg\} 
	\end{align*}
	forms a minimal generating set for $I(H).$ The binomials $\phi_1$, $\psi_0,$ $\psi_1$ and $\psi_2$ arise from the $\RF(\beta_3)$-relations corresponding to $\delta_{24}$, $\delta_{15}$, $\delta_{25}$, and $\delta_{35}$ respectively. The binomials $\theta$ and $\phi_0$ arise from the $\RF(\beta_3)'$-relations corresponding to $\delta_{45}'$ and $\delta'_{14}$ respectively. \\
	
	\textbf{Subcase 3.} Suppose $r = 2$, $r' = 2$, $\mu = 0$ and $\PF(H) = \{\beta_3\}.$ \\
	We consider the following $\RF$-matrices for $\beta_3,$
	\begin{align*}
	\RF(\beta_3) =
	\begin{bmatrix}
		-1 &  1 & 0 & q & v-w-1   \\
		0 & -1 & 1 &  q & v-w-1    \\
		0 & 0 & -1 & q+1 & v-w-1    \\
		\lambda-1 & 0 & 1 & -1 & v-1  \\
		\lambda-1 & 0 & 1 & q_z & -1
	\end{bmatrix}
	, 
	\RF(\beta_3)' =
		\begin{bmatrix}
			-1 & 0 & 2 & q-1 & v-w-1   \\
			0 & -1 & 1 &  q & v-w-1    \\
			0 & 0 & -1 & q+1 & v-w-1    \\
			\lambda-1 & 0 & 1 & -1 & v-1  \\
			\lambda-1 & 0 & 1 & q_z & -1
		\end{bmatrix}.
	\end{align*}
	Let $\delta_1,\delta_2,\delta_3,$ $\delta_4,$ $\delta_5$ and $\delta'_1, \delta'_2, \delta'_3$, $\delta'_4,$ $\delta'_5$ denote the rows of $\RF(\beta_3)$ and $\RF(\beta_3)'$ respectively. By \cite[Theorem 4.5]{patil93}, the set 
	\[ \{ \xi_{11}, \ \xi_{12}, \ \xi_{22}, \ \theta = x_3^{q_z+1} - x_4^v, \ \psi_0 = x_0^{\lambda} - x_2x_3^{q'}x_4^{v-w}, \ \psi_1 = x_0^{\lambda-1}x_1 - x_3^{q-q_z}x_4^{v-w} \} \]
	forms a minimal generating set for $I(H).$ The binomials $\theta$ and $\psi_1$ arise from the $\RF(\beta_3)$-relations corresponding to $\delta_{45}$ and $\delta_{25}$ respectively and $\psi_0$ arises from the $\RF(\beta_3)'$-relation corresponding to $\delta'_{15}$.  	
\end{proof}

With the knowledge of a minimal generating set of $I(H)$ from \cite{patil93}, one may continue this process in higher embedding dimension as well. The non-generic nature of $I(H)$ and the non-uniqueness of $\RF(H)$ does suggest that the answer to the question is in the affirmative in higher embedding dimension as well. But the same reason turns out to be stumble block when one tries to prove the result in general. 
\medskip

{\it Acknowledgment.} The authors would like to thank the anonymous referee for valuable comments and suggestions.

\bibliographystyle{plain}

\end{document}